\documentclass[a4paper]{amsart}
\usepackage{amsmath,amsthm,amssymb,latexsym,epic,bbm,comment,mathrsfs}
\usepackage{graphicx,enumerate,stmaryrd,xcolor}
\usepackage[all,2cell]{xy}
\xyoption{2cell}

\newtheorem{theorem}{Theorem}
\newtheorem{lemma}[theorem]{Lemma}
\newtheorem{corollary}[theorem]{Corollary}
\newtheorem{proposition}[theorem]{Proposition}

\newtheorem{example}[theorem]{Example}

\newtheorem{remark}[theorem]{Remark}
\usepackage[all]{xy}
\usepackage[active]{srcltx}
\usepackage[parfill]{parskip}
\usepackage{enumerate}

\begin{document}
\title[Simple gradable $\mathbb{C}\lbrack\mathtt{h}\rbrack$-torsion 
free $\mathfrak{sl}_2$-modules]{Simple gradable
$\mathbb{C}\lbrack\mathtt{h}\rbrack$-torsion 
free $\mathfrak{sl}_2$-modules}

\author[V.~Mazorchuk and S.~Xu]{Volodymyr Mazorchuk and Shuoyang Xu}

\begin{abstract}
We give an explicit classification as well as an explicit 
construction of simple $\mathfrak{sl}_2$-modules that are torsion-free
of finite rank with respect to the action of the Cartan subalgebra
and which admit a grading by a finite cyclic group corresponding 
to the rank of the module.
\end{abstract}

\maketitle

\section{Introduction and description of the results}\label{s0}

\subsection{Background}\label{s0.1}

Classification of irreducible representations of a given algebraic
structure is a fundamental problem in contemporary representation
theory. In full generality, this problem is usually very difficult.
But it often admits reasonable answers under some restrictions.

For simple complex Lie algebras, there are many interesting 
classifications of various classes of simple modules, including
finite dimensional modules, highest weight modules, 
weight modules, Whittaker modules, Gelfand-Zeitlin modules and 
others, see e.~g. \cite{Hu,Mat,KTWWY,Ni,Ni2} and references therein.
The only  simple complex Lie algebra for which 
some kind of general answer is known is the case of the Lie algebra
$\mathfrak{sl}_2$, see \cite{Bl79,Bl81}. Unfortunately, this answer
is not explicit but rather reduces the problem to classification of 
the similarity classes of left irreducible elements over a certain
non-commutative principal ideal domain. 

Recently, there were a few studies which attempted to make the answer
of \cite{Bl79,Bl81} explicit in some special cases. For example, 
the main result of \cite{GKM} gives an explicit classification of 
simple $\mathfrak{sl}_2$-modules that are torsion-free of rank one over
the Cartan subalgebra. The main result of \cite{GNZ} provides an explicit
construction and classification of a family of 
simple $\mathfrak{sl}_2$-modules that are free of rank two over
the Cartan subalgebra. Interesting families of 
simple $\mathfrak{sl}_2$-modules that are free of finite 
(but arbitrarily large) rank over
the Cartan subalgebra were constructed in \cite{LN}.

\subsection{Results}\label{s0.2}

In this paper we generalize, in some sense, the approach and the
results of \cite{GKM}.  The algebra $\mathfrak{sl}_2$ has a natural
$\mathbb{Z}$-grading in which the Cartan part has degree $0$,
the positive root generator has degree $1$ and the negative root generator
has degree $-1$. For any positive integer $m$, we obtain the induced
grading by the cyclic group $\mathbf{C}_m\cong \mathbb{Z}/m\mathbb{Z}$.
The main aim of the present paper is to provide
an explicit classification and construction of simple 
$\mathfrak{sl}_2$-modules that are torsion-free of rank $m$ over the Cartan 
subalgebra and admit a $\mathbf{C}_m$-grading. The results of 
\cite{GKM} can be interpreted as corresponding to the (degenerate, 
in some sense) case $m=1$.

Our classification result is an elegant generalization of the main result of 
\cite{GKM}. Let $m$ be a positive integer. 
After fixing a central character $\vartheta$ of
$U(\mathfrak{sl}_2)$, we choose a real number $\omega$
in a certain way (depending on $\vartheta$) 
and consider the set $\Omega_\omega$ of all
complex numbers that have the real part inside the half-interval 
$[\omega,\omega +2)$. Now we consider the set $\Psi^{(m)}$ that 
consists of all tuples $(c,\zeta_0,\dots,\zeta_{m-1})$,
where $c\in \mathbb{C}^\times$ and all $\zeta_i$ are functions
from $\Omega_\omega$ to $\mathbb{Z}$ with finite support.
The group $\mathbf{C}_m$ acts on $\Psi^{(m)}$ 
by cyclic permutation of the $\zeta_i$'s. Our main classification 
result is the following statement, see Theorem~\ref{thm-s1.8-9}:

{\bf Theorem A.} {\em Let $m$ be a positive integer.
Then the isomorphism classes of simple 
$\mathfrak{sl}_2$-modules that are torsion-free of rank $m$ over the Cartan 
subalgebra and admit a $\mathbf{C}_m$-grading
are in bijection with the regular orbits of $\mathbf{C}_m$
on $\Psi^{(m)}$. }

Each tuple $\boldsymbol{\zeta}:=(c,\zeta_0,\dots,\zeta_{m-1})$ corresponds to 
a non-zero element $g_{\boldsymbol{\zeta}}$ in the field $\mathbb{C}(\mathtt{h})$  
of rational functions in the element $\mathtt{h}$
(the usual generator of the Cartan subalgebra). 
We have the automorphism of
$\sigma$ of $\mathbb{C}(\mathtt{h})$ which sends $\mathtt{h}$
to $\mathtt{h}-2$ and hence we can consider the corresponding
skew Laurent polynomial algebra $R:=\mathbb{C}(\mathtt{h})[x,x^{-1},\sigma]$.
There is an embedding of the primitive quotient of 
$U(\mathfrak{sl}_2)$ corresponding to $\vartheta$ into
$R$ which allows one to consider $R$-modules as
$\mathfrak{sl}_2$-modules. The simple 
$\mathfrak{sl}_2$-module corresponding to $\boldsymbol{\zeta}$ turns
out to be the socle of $R/R(x^m-g_{\boldsymbol{\zeta}})$.
Our explicit description of this socle, see Theorem~\ref{thm-s2.5-7}
for details, can be summarized as follows:

{\bf Theorem B.} {\em Given a tuple $\boldsymbol{\zeta}$
as above, the corresponding simple $\mathfrak{sl}_2$-module is 
the $\mathfrak{sl}_2$-submodule of 
$R/R(x^m-g_{\boldsymbol{\zeta}})$
which is described by an explicit $\mathbb{C}$-basis.
}

As a bonus, we also obtain similar results for the first Weyl algebra.
As an interesting application, we obtain a very neat
irreducibility criterion for elements of $R$ that have the form
$x^m-\beta\in R$, where $\beta\in \mathbb{C}(\mathtt{h})$,
see Corollary~\ref{cor-lirr}, significantly improving
and generalizing
the earlier results in the literature, see, for example, 
\cite[Corollary~2]{BP}.

\subsection{Methods}\label{s0.3}

Our main idea is based on the results of \cite{Bl79,Bl81},
namely, on the connection between $\mathfrak{sl}_2$-modules
and $R$-modules mentioned above. In more detail, there is a
bijection between simple $R$-modules and simple 
$\mathfrak{sl}_2$-modules with a fixed central character, 
the action of the Cartan subalgebra  on which is torsion-free. 
The bijection is given by restricting to the image of the 
embedding and then taking the socle. This brings us to the
following two challenges:
\begin{itemize}
\item Give an explicit classification of simple $R$-modules
that have dimension $m$ over $\mathbb{C}(\mathtt{h})$ and admit a $\mathbf{C}_m$-grading.
\item For each simple $R$-module obtained in the previous item,
give an explicit description of its $\mathfrak{sl}_2$-socle.
\end{itemize}

The algebra $R$ is a non-commutative principal ideal domain.
Therefore its simple modules of dimension $m$ over $\mathbb{C}(\mathtt{h})$
correspond to left irreducible elements of the form
\begin{displaymath}
x^m-\alpha_1 x^{m-1}-\dots -\alpha_m,
\text{ where } \alpha_i\in \mathbb{C}(\mathtt{h}).
\end{displaymath}
The condition of existence of a $\mathbf{C}_m$-grading restricts this
further to the elements of the form $x^m-\beta$, for $\beta\in \mathbb{C}(\mathtt{h})$.
So, our Theorem~A reduces to classification of the similarity classes of
left irreducible elements of such form. To obtain this,
for small $m$, we mostly used some inspired computations and combinatorics 
as well as significant  help of Microsoft Copilot, from which we learned 
a lot about different approaches and methods of doing algebra in
difference ring settings, see e.g. Theorem~\ref{thm-s1.5-3}. 
However, the case of an arbitrary $m$ requires a more sophisticated
approach rooted in representation theory.

Given Theorem~A, our Theorem~B reduces to a careful study of the restriction of
a certain explicit simple $R$-module to $\mathfrak{sl}_2$.
Here the proof contains two major steps: at the first step 
we have to determine  one non-zero element in the socle of such restriction.
At the second step, we have to determine the whole 
$\mathfrak{sl}_2$-submodule which this non-zero element generates. 
Both steps reduce, to a very large extent, to a careful and meticulous
combinatorial case-by-case walk-through.

\subsection{Structure}\label{s0.4}

The paper is split naturally into three parts.  
In Section~\ref{s1} we study various types of $R$-modules.
Due to the connection between $R$-modules and 
$\mathfrak{sl}_2$-modules described in \cite{Bl79,Bl81},
the problem of explicit classification of simple $R$-modules
is interesting. This problem seems to be hard, in general.
In Section~\ref{s1} we investigate various special cases, obtain
some partial results and point out a few challenges.
Our main result in this section is Theorem~\ref{thm-s1.8-9}.

In Section~\ref{s2} we apply the results of 
Section~\ref{s1} to obtain our principal applications to 
the study of  $\mathfrak{sl}_2$-modules.
In Section~\ref{s3}, which can be viewed as a bonus,
we apply the results of Section~\ref{s1} to
obtain results, similar to those from Section~\ref{s2},
in the setup of the first Weyl algebra.

\subsection*{Acknowledgements}
The first author is partially supported by the Swedish Research Council.
The second author is partially supported by the Fundamental 
Research Funds for the Central Universities
(No.~B250201222), and by the Young Faculty Overseas 
Training Program, which is jointly funded by Hohai
University and the China Scholarship Council.
The research was done during the visit of the second author
to Uppsala University whose  hospitality is gratefully acknowledged.
Several ideas that are discussed in 
Section~\ref{s1} are inspired by conversations with 
Microsoft Copilot and ChatGPT+.
\vspace{5mm}
 
\section{The algebra $R$ and $R$-modules}\label{s1}

\subsection{The skew Laurent polynomial algebra $R$}\label{s1.1}

Consider the field $\Bbbk:=\mathbb{C}(\mathtt{h})$ of rational 
functions in the variable $\mathtt{h}$ with complex coefficients. Let
$\sigma$ be the automorphism of $\Bbbk$ sending $\mathtt{h}$
to $\mathtt{h}-2$. To this datum, we can associate the 
{\em skew Laurent polynomial algebra} $R=\Bbbk[x,x^{-1},\sigma]$
defined as a free $\Bbbk$-module with basis $x^i$, where $i\in\mathbb{Z}$,
and in which the multiplication is defined via
\begin{displaymath}
x^i\cdot x^i:=x^{i+j}, \text{ for }i,j\in\mathbb{Z}, 
\end{displaymath}
as well as
\begin{displaymath}
xa=\sigma(a)x,\text{ for }a\in \Bbbk.
\end{displaymath}
The field $\Bbbk$ is an example of a {\em difference field}
in the terminology of \cite{PS}. We note that
the subfield $\Bbbk^\sigma$ of $\sigma$-invariants in $\Bbbk$
coincides with $\mathbb{C}$. This will be important for
some arguments later.

We note that $R$ is an Euclidean ring in the obvious way, 
in particular, it is a non-commutative principal ideal 
domain (both left and right).

We denote by $\mathbf{G}$ the subgroup of $\Bbbk^\times$
consisting of all elements of the form $\frac{\sigma(a)}{a}$,
where $a\in\Bbbk^\times$. Note that $\mathbf{G}$ is the 
group of $1$-coboundaries for the action of the cyclic 
group generated by $\sigma$ on $\Bbbk^\times$.

We recall that the algebra $\mathbb{C}(\mathtt{h})$  has a 
$\mathbb{C}$-basis, which we will denote by  $\mathbf{B}$,
consisting of all monomials  $\mathtt{h}^i$, 
where $i\geq 0$, as well as all fractions 
$\frac{1}{(\mathtt{h}-t)^m}$, where $t\in \mathbb{C}$
and $m\in\mathbb{Z}_{>0}$.

\subsection{Left ideals}\label{s1.15}

Being a principal ideal domain, each left ideal $I$ in $R$
is principal, that is, there exists $a\in R$ such that 
$I=Ra$. Each $a\in R$ can be written uniquely as
\begin{displaymath}
a=\sum_{i\in \mathbb{Z}}a_i x^i, 
\end{displaymath}
where $a_i\in \Bbbk$ and only finitely many of the $a_i$'s
are non-zero. If $a\neq 0$, then there is $j\in\mathbb{Z}$
such that $a_j\neq 0$ and $a_i=0$, for all $i>j$. In this
case $j$ is called the {\em height} of $a$ and the corresponding
$a_j$ is called the {\em leading term}. A non-zero $a\in R$
is called {\em monic} provided that its leading term equals $1$.
The {\em degree} of $a$ is the difference between the height 
and the minimal $l$ such that $a_l\neq 0$.

We will call a non-zero $a\in R$ {\em reduced} provided that 
it is monic and, additionally, satisfies $a_0\neq 0$
and $a_i=0$, for $i<0$. 

\begin{lemma}\label{lem0}
The map $a\mapsto Ra$ is a bijection between the set of all
reduced elements of $R$ and the set of all non-zero left ideals.
\end{lemma}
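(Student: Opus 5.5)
Surjectivity and injectivity are handled separately; both rest on two facts about $R$. First, the units of $R$ are exactly the elements $bx^k$ with $b\in\Bbbk^\times$ and $k\in\mathbb{Z}$. Second, height and lowest exponent behave additively under multiplication. Both follow because $\Bbbk$ is a field and $\sigma$ is an automorphism. For $a,b\neq 0$ with top terms $a_jx^j$ and $b_lx^l$, the top term of $ab$ is $a_j\sigma^j(b_l)x^{j+l}$, which is nonzero. The same holds for the bottom terms. Hence degree is additive, $R$ has no zero divisors, and an element of degree $0$ is a monomial $bx^k$, which is invertible with inverse $x^{-k}b^{-1}$. Conversely, if $uv=1$ then $\deg u+\deg v=0$, so units have degree $0$.

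For surjectivity, let $I\neq 0$ be a left ideal. Since $R$ is a left principal ideal domain, $I=Ra$ for some $a\neq 0$. Write $a=\sum_{i=l}^{j}a_ix^i$ with $a_j\neq 0$ and $a_l\neq 0$. Multiplying on the left by the unit $x^{-l}$ shifts the exponents so that the lowest one is $0$. The resulting leading coefficient is $\sigma^{-l}(a_j)$, so we then multiply on the left by its inverse. The new element $a'$ satisfies $Ra'=Ra$, has lowest exponent $0$ with nonzero constant term, and has leading coefficient $1$; that is, $a'$ is reduced.

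For injectivity, suppose $a,b$ are reduced and $Ra=Rb$. Then $a=ub$ and $b=va$ for some $u,v\in R$. This gives $a=uva$, so $(1-uv)a=0$, and since there are no zero divisors, $uv=1$. Therefore $u$ is a unit, $u=cx^k$. Comparing lowest exponents in $a=cx^kb$, both $a$ and $b$ have lowest exponent $0$, so $k=0$. Comparing leading coefficients, $1=c\cdot 1$, so $c=1$ and $a=b$.

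The only step needing care is the description of the units of $R$ via the additivity of degree, and even that is routine. Everything else is normalization.
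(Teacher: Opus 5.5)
Your proof is correct. Surjectivity is the same normalization the paper uses: shift by a power of $x^{\pm1}$, then divide by the leading coefficient. Injectivity, however, goes by a different route.

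The paper argues via dimensions. From $Ra=Rb$ it gets $\dim_\Bbbk R/Ra=\dim_\Bbbk R/Rb$, hence $\deg a=\deg b$. If $a\neq b$, then $a-b$ is a nonzero element of $Ra$ of strictly smaller degree. The inclusion $R(a-b)\subseteq Ra$ would then give a surjection $R/R(a-b)\to R/Ra$ from a space of smaller $\Bbbk$-dimension, which is impossible.

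You instead run the standard associates argument in a domain. From $Ra=Rb$ you get $a=ub$ with $u$ having a right inverse. Degree additivity forces $u=cx^k$, and the normalization of reduced elements pins down $k=0$ and $c=1$.

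The paper's version is shorter once one has the formula $\dim_\Bbbk R/Rc=\deg c$. The paper uses that formula without proof, and it is the invariant behind the later results (Lemma~\ref{lem01}, the modules $R/R(x^m-\beta)$).

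Your version is self-contained. It needs only that the product of top (resp. bottom) coefficients is nonzero, and as a by-product it identifies the unit group of $R$ as $\{cx^k : c\in\Bbbk^\times,\ k\in\mathbb{Z}\}$. The dimension formula itself rests on the same degree additivity you prove: that is what shows $1,x,\dots,x^{d-1}$ are linearly independent modulo $Rc$. So your argument isolates the ingredient the paper's proof relies on implicitly.
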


\begin{proof}
The map is obviously well-defined.
Let $I$ be a non-zero ideal of $R$ and $b\in R$
be such that $I=Rb$. Then $b\neq 0$. Multiplying $b$ on the left with
the invertible elements $x^{\mp 1}$, which, clearly, 
does not affect $I$, we can substitute $b$ by some $b'$
which satisfies $b'_0\neq 0$ and $b'_i=0$, for $i<0$.
Now, dividing by the (invertible) leading term of $b$,
we obtain a reduced element that generates $I$.
Hence our map is surjective.

To prove injectivity, assume $Ra=Rb$ with both
$a$ and $b$ reduced. Then $R/Ra=R/Rb$ and hence
$a$ and $b$ have the same degree. If $a\neq b$, 
then $a-b\in Ra$ is non-zero and has strictly 
smaller degree. Therefore the $\Bbbk$-dimension
of $R/R(a-b)$ is strictly smaller than that 
of $R/Ra$, contradicting $R(a-b)\subset Ra$.
\end{proof}

\begin{lemma}\label{lem01}
Let $a\in R$ be such that $0\subsetneq Ra\subsetneq R$.
If $R/Ra$ is a simple $R$-module and $k=\dim_\Bbbk(R/Ra)$, 
then, for any  $b\not\in Ra$, the elements 
\begin{equation}\label{eqeq2}
b+Ra,\,xb+Ra,\,x^2b+Ra,\,\dots,\, x^{k-1}b+Ra  
\end{equation}
form a basis of $R/Ra$ and the matrix of the action of 
$x$ in this basis is given by
\begin{displaymath}
\left(
\begin{array}{cccccccc}
0&0&0&0&0&\dots&0&\alpha_0\\ 
1&0&0&0&0&\dots&0&\alpha_1\\ 
0&1&0&0&0&\dots&0&\alpha_2\\ 
0&0&1&0&0&\dots&0&\alpha_3\\ 
0&0&0&1&0&\dots&0&\alpha_4\\ 
\vdots&\vdots&\vdots&\vdots&\vdots&\ddots&\vdots&\vdots\\ 
0&0&0&0&0&\dots&0&\alpha_{k-2}\\ 
0&0&0&0&0&\dots&1&\alpha_{k-1}\\ 
\end{array}
\right),
\end{displaymath}
for some $\alpha_0,\alpha_1,\dots,\alpha_{k-1}\in \Bbbk$.
Moreover, we have 
\begin{displaymath}
\mathrm{Ann}_R(b+Ra)=R(x^k-\alpha_{k-1}x^{k-1}-\alpha_{k-2}x^{k-2}-\dots-\alpha_{1}x-\alpha_0). 
\end{displaymath}
\end{lemma}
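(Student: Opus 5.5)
The plan is to show first that the $k$ elements in \eqref{eqeq2} are linearly independent over $\Bbbk$. Set $v:=b+Ra$; then $v\neq 0$ because $b\not\in Ra$. Let $V$ be the $\Bbbk$-span of $v,xv,\dots,x^{k-1}v$. Suppose a nontrivial relation $\sum_{i=0}^{j}\lambda_i x^i v=0$ holds with $j\le k-1$ and $\lambda_j\neq 0$, and take $j$ minimal. Then $x^jv$ lies in the $\Bbbk$-span $W$ of $v,\dots,x^{j-1}v$. Using $x\lambda=\sigma(\lambda)x$, we get that $xW\subseteq W$. The relation also gives $v$ in terms of $xv,\dots,x^jv$, because the coefficient $\lambda_0$ must be nonzero by minimality: otherwise we could multiply by $x^{-1}$ and obtain a shorter relation. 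Hence $x^{-1}W\subseteq W$ as well. So $W$ is a nonzero $R$-submodule of $R/Ra$. By simplicity $W=R/Ra$, and then $\dim_\Bbbk W\le j<k$, a contradiction. Since $\dim_\Bbbk(R/Ra)=k$ (this is finite because $Ra\neq 0$; the degree of $a$ equals $k$), the $k$ independent vectors form a basis.

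Next comes the matrix of $x$. For $i<k-1$, the vector $x\cdot x^iv=x^{i+1}v$ is the next basis vector, which produces the subdiagonal ones. Expand $x^kv$ in the basis as $x^kv=\sum_{i=0}^{k-1}\alpha_i x^iv$ with $\alpha_i\in\Bbbk$; this gives the last column. One point to note is that $x$ acts $\sigma$-semilinearly, but a matrix with respect to a basis is still well defined, as the paper intends.

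For the annihilator, put $p:=x^k-\sum_{i}\alpha_ix^i$. By construction $pv=0$, so $Rp\subseteq \mathrm{Ann}_R(v)$. The left ideal $\mathrm{Ann}_R(v)$ equals $Rq$ for some reduced $q$, by Lemma~\ref{lem0}. The map $r\mapsto rv$ gives an isomorphism $R/Rq\cong Rv=R/Ra$, the last equality by simplicity, so $\deg q=k$. The quotient $R/Rp$ also has $\Bbbk$-dimension $k$. Then $Rp\subseteq Rq$ with $\dim R/Rp=\dim R/Rq$ forces $Rp=Rq$.

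The one delicate point is the dimension claim $\dim_\Bbbk R/Rc=\deg c$. This follows by Euclidean division: after multiplying by a power of $x$ we may assume $c$ is reduced, and then $1,x,\dots,x^{\deg c-1}$ is a basis of $R/Rc$. Note that $p$ need not be reduced if $\alpha_0=0$, but simplicity rules this out. If $\alpha_0=0$, then $x^kv\in\mathrm{span}(xv,\dots,x^{k-1}v)$, and applying $x^{-1}$ gives a dependence among $v,\dots,x^{k-1}v$, contradicting the independence shown above. So $\deg p=k$, and the argument is complete.
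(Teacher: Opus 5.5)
Your proof is correct and follows the same route as the paper. The steps match: a linear dependence would yield a proper nonzero submodule, the companion-matrix form is read off from $x^kv=\sum_i\alpha_ix^iv$, and $Rp=\mathrm{Ann}_R(v)$ follows by comparing $\Bbbk$-dimensions of the quotients. You also supply details the paper leaves implicit: the minimal-relation argument showing the span is stable under $x^{\pm 1}$, the formula $\dim_\Bbbk R/Rc=\deg c$, and the check that $\alpha_0\neq 0$.
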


\begin{proof}
If the elements in Equation~\eqref{eqeq2} are linearly independent over $\Bbbk$,
they form a $\Bbbk$-basis (since their number equals the dimension)
and then the form of the matrix describing the action of $x$ 
in this basis follows from the
definitions. From this matrix, we have that $\mathrm{Ann}_R(b+Ra)$ 
contains $R(x^k-\alpha_{k-1}x^{k-1}-\alpha_{k-2}x^{k-2}-\dots-\alpha_{1}x-\alpha_0)$.
If this containment were proper, then $R/\mathrm{Ann}_R(b+Ra)$ would have 
strictly smaller  $\Bbbk$-dimension than $k$, contradicting simplicity
of $R/Ra$.

If the elements in Equation~\eqref{eqeq2} were linearly dependent over $\Bbbk$,
their $\Bbbk$-linear span would be a proper non-zero submodule of $R/Ra$,  
contradicting our assumption that $R/Ra$ is simple.
\end{proof}

Let $a\in R$ be non-zero and not invertible. 
Then $a$ is called {\em left irreducible} provided that
$Ra$ is a maximal left ideal, equivalently, the 
$R$-module $R/Ra$ is simple. Two left irreducible elements
$a,b\in R$ are called {\em similar}, denoted $a\sim b$,
provided that $R/Ra\cong R/Rb$. Thus, by definition, 
the isomorphism classes of simple $R$-modules are
in bijection with the similarity classes of left 
irreducible elements in $R$. Note that 
$a\sim b$ is equivalent to the following condition:
there is $v\in R\setminus Ra$ such that 
$\mathrm{Ann}_R(v+Ra)=Rb$. This is, in turn, equivalent
to the following condition: there exist 
$v\in R\setminus Ra$ and $u\in R$ such that $bv=ua$.

\subsection{$\Bbbk$ as an $R$-module}\label{s1.2}

An $R$-module structure on $\Bbbk$ is uniquely determined
by a choice of $u\in \Bbbk$, interpreted as the image of
$1$ under $x$, that is: $x\cdot 1=u$. Then, for any $a\in\Bbbk$
we have $x\cdot a=\sigma(a)u$. We denote this 
$R$-module by $\Bbbk_u$. It is easy to check  that 
$\Bbbk_u\cong \Bbbk_v$ if and only if $\frac{u}{v}\in\mathbf{G}$,
see \cite[Section~2]{GKM} for details. We also note that each
$\Bbbk_u$ is a simple $R$-module.

\subsection{$R$-modules that are finite dimensional over $\Bbbk$}\label{s1.3}

Let $M$ be an $R$-module that is finite dimensional over $\Bbbk$.
The $R$-module structure on $M$ is uniquely determined by 
a choice of an invertible $\sigma$-skew linear operator $F=F_M$ on $M$
which describes the action of $x$. If we fix a $\Bbbk$-basis
$\mathbf{m}:=(m_1,\dots,m_r)$ in $M$, we can represent 
$F$ via its matrix $[F]=[F]_{\mathbf{m}}^{\mathbf{m}}=(f_{i,j})$,
where $i,j\in\{1,2,\dots,r\}$,
defined, as usual, via
\begin{displaymath}
F(m_j)=\sum_{i=1}^r  f_{i,j} m_i.
\end{displaymath}

Given two $R$-modules $M$ and $N$, an $R$-homomorphism
$\varphi$ from $M$ to $N$ is a $\Bbbk$-linear map which 
intertwines $F_M$ and $F_N$, that is: $F_N\circ \varphi=\varphi\circ F_M$.
Given a basis $\mathbf{m}$ in $M$ and a basis $\mathbf{n}$ in $N$,
we can represent $\varphi$, as usual, by its matrix 
$[\varphi]=[\varphi]_{\mathbf{n}}^{\mathbf{m}}$. The fact that 
$F$ is $\sigma$-skew linear means that the intertwining condition,
written in the matrix form, becomes
\begin{displaymath}
[F_N]_{\mathbf{n}}^{\mathbf{n}}\,
\sigma([\varphi])_{\mathbf{n}}^{\mathbf{m}}=
[\varphi]_{\mathbf{n}}^{\mathbf{m}}\,
[F_M]_{\mathbf{m}}^{\mathbf{m}}.
\end{displaymath}
Here is a general analogue of Schur's lemma in our setup
(several ideas in this proof were suggested by Microsoft Copilot
with a general reference to \cite{PS}).

\begin{lemma}\label{lem1}
Assume that $M$ as above is simple. Then
$\mathrm{End}_{R}(M)\cong \mathbb{C}$.
\end{lemma}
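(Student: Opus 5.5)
Since $M$ is simple, Schur's lemma makes $D:=\mathrm{End}_R(M)$ a division algebra over $\mathbb{C}$. The plan is to show that every $\varphi\in D$ is algebraic over $\mathbb{C}$; since $\mathbb{C}$ is algebraically closed, this forces $D=\mathbb{C}$. Note that $D$ consists of $\Bbbk$-linear maps, because $\Bbbk\subset R$. So each $\varphi\in D$ is a $\Bbbk$-linear endomorphism of the finite dimensional $\Bbbk$-space $M$, of dimension $r$ say.

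\emph{Step 1: $\varphi$ has a $\sigma$-invariant characteristic polynomial.} Fix a $\Bbbk$-basis $\mathbf{m}$ of $M$ and write $A=[F]$ and $P=[\varphi]$. The intertwining condition from Subsection~\ref{s1.3} reads $A\,\sigma(P)=PA$. Since $F$ is invertible, $A$ is invertible, so $\sigma(P)=A^{-1}PA$. Hence $\sigma(P)$ and $P$ are conjugate over $\Bbbk$ and have the same characteristic polynomial. On the other hand, the characteristic polynomial of $\sigma(P)$ is obtained from $\chi_P(t)\in\Bbbk[t]$ by applying $\sigma$ to its coefficients. Thus every coefficient of $\chi_P$ lies in $\Bbbk^\sigma=\mathbb{C}$, as noted in Subsection~\ref{s1.1}.

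\emph{Step 2: conclude that $\varphi$ is a scalar.} Since $\chi_P\in\mathbb{C}[t]$ is nonconstant (for $M\neq0$), it has a root $\lambda\in\mathbb{C}$. Then $\varphi-\lambda\,\mathrm{id}_M$ lies in $D$ and has determinant $\chi_P(\lambda)=0$, so it is not injective. Its kernel is an $R$-submodule of $M$, and it is nonzero. By simplicity of $M$ the kernel is all of $M$, so $\varphi=\lambda\,\mathrm{id}_M$. This gives $D=\mathbb{C}$. Alternatively, Cayley--Hamilton gives $\chi_P(\varphi)=0$, and in the division algebra $D$ one of the linear factors $\varphi-\lambda_i$ must vanish.

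The only step needing care is Step~1, specifically the claim $\chi_{\sigma(P)}=\sigma(\chi_P)$. This holds because $\sigma$ is a ring automorphism of $\Bbbk$ and the determinant is a polynomial in the matrix entries. The remaining ingredient is the identification $\Bbbk^\sigma=\mathbb{C}$. To check it: a rational function fixed by $\mathtt{h}\mapsto\mathtt{h}-2$ must have a periodic, hence finite, set of poles, so it has no poles and is a polynomial; a periodic polynomial is constant.
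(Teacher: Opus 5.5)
Your proof is correct, but it takes a different route from the paper.

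\textbf{The paper's route.} It also starts from Schur's lemma, but then proves that $\mathrm{End}_R(M)$ is finite dimensional over $\mathbb{C}$. Given $s>r^2$ endomorphisms, it takes a $\Bbbk$-linear dependence among their matrices with minimal support and normalizes one coefficient to $1$. Using $\sigma([\varphi])=[F_M]^{-1}[\varphi][F_M]$, it shows that $\sigma^{-1}$ of the coefficient vector is again a dependence. Minimality then forces the coefficients to be $\sigma$-invariant, hence in $\mathbb{C}$, and a finite dimensional division algebra over $\mathbb{C}$ is $\mathbb{C}$.

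\textbf{Your route.} You use the same conjugation relation $\sigma(P)=A^{-1}PA$, but for a single endomorphism. It makes $\chi_P$ $\sigma$-invariant, so $\chi_P\in\mathbb{C}[t]$. A complex root $\lambda$ together with simplicity then gives $\varphi=\lambda\,\mathrm{id}_M$ directly.

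\textbf{Comparison.} Your argument is shorter and more elementary. It does not actually need the division algebra structure, since Step~2 uses only simplicity. It also shows that every $R$-endomorphism of any $R$-module finite dimensional over $\Bbbk$ is annihilated by a polynomial in $\mathbb{C}[t]$ of degree $r$.

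The paper's minimal-support argument never uses simplicity and gives a stronger global statement. It shows that $\mathbb{C}$-linearly independent $R$-endomorphisms remain $\Bbbk$-linearly independent, so $\dim_{\mathbb{C}}\mathrm{End}_R(M)\le r^2$ for every such $M$. This is the standard \emph{constants} lemma of difference algebra. Your eigenvalue information alone would not give such a bound in the non-simple case. For example, the strictly upper triangular matrices in $\mathrm{Mat}_2(\Bbbk)$ all have characteristic polynomial $t^2$, yet form an infinite dimensional $\mathbb{C}$-algebra. For the lemma as stated, though, your approach is entirely sufficient.
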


We note that $R$ has uncountable dimension over $\mathbb{C}$
and hence the usual argument (due to Dixmier) for establishing
the claim of Lemma~\ref{lem1}, 
see e.g. \cite[Proposition~2.6.8]{Di}, is not directly applicable. 

\begin{proof}
From the classical Schur's lemma, we know that 
$\mathrm{End}_{R}(M)$  is a division algebra
over $\mathbb{C}$. So, to complete the proof, we just need to
show that $\mathrm{End}_{R}(M)$ is finite dimensional over 
$\mathbb{C}$. Let $r=\dim_\Bbbk(M)$. Our aim is to show that
any collection of $s>r^2$ elements in $\mathrm{End}_{R}(M)$
are linearly dependent over $\mathbb{C}$.

So, let $v_1,\dots,v_s\in \mathrm{End}_{R}(M)$ be such that
$s>r^2$. Then these elements are linearly dependent over
$\Bbbk$ since $s>\dim_\Bbbk(\mathrm{End}_{\Bbbk}(M))$. We choose the standard
basis in $\mathrm{End}_{\Bbbk}(M)$ and let $Q$ be the matrix
obtained by writing the coordinates of the elements 
$v_1,\dots,v_s$ with respect to this basis as columns. Then there
is $0\neq u\in \Bbbk^s$ such that $Qu=0$. Assume that 
the number of the non-zero coordinates in $u$ is minimal possible.
Let $u_i\neq 0$ be a non-zero coordinate of $u$. Then we
can rescale $u$ such that $u_i=1$.

Consider the invertible $\Bbbk$-linear operator $\Psi$ on
$\mathrm{End}_{\Bbbk}(M)$ given by 
$U\mapsto [F_M]^{-1}U[F_M]$ and let $\llbracket F_M\rrbracket$
be its matrix in the basis chosen above.
By construction, each $v_j$ in $Q$ satisfies
$\sigma(v_j)=\Psi(v_j)$ and hence we have
$\sigma(Q)= \llbracket F_M\rrbracket Q$. 
Starting from $Qu=0$, we have $\llbracket F_M\rrbracket Qu=0$.
Using $\llbracket F_M\rrbracket Q= \sigma(Q)$, we thus get
$\sigma(Q)u=0$ and therefore we deduce $Q\sigma^{-1}(u)=0$
by applying $\sigma^{-1}$.

Note that $\sigma^{-1}(u_i)=\sigma^{-1}(1)=1=u_i$.
From $Qu=Q\sigma^{-1}(u)=0$, we deduce that $Q(u-\sigma^{-1}(u))=0$.
However, all zero coordinates of $u$ remain zero
in $u-\sigma^{-1}(u)$ and, additionally,
the $i$-th coordinate of $u-\sigma^{-1}(u)$ is zero.
Due to our minimality assumption, this forces 
$u-\sigma^{-1}(u)=0$, that is, $u$ is $\sigma$-invariant.
Consequently, all coordinates of $u$ belong to
$\mathbb{C}$ and we are done.
\end{proof}

\subsection{Simple $R$-modules that are two dimensional over $\Bbbk$}\label{s1.4}

For $\alpha,\beta\in \Bbbk$, with $\beta\neq 0$, consider the reduced 
element $x^2-\alpha x -\beta$ and the corresponding $R$-module
\begin{displaymath}
M_{\alpha,\beta}:=R/R(x^2-\alpha x -\beta). 
\end{displaymath}
The elements $1$ and $x$ give rise to an obvious $\Bbbk$-basis 
of the module 
$M_{\alpha,\beta}$, in particular, $\dim_\Bbbk(M_{\alpha,\beta})=2$.
The matrix of $F$ in this basis has the
following form, see Lemma~\ref{lem01}:
\begin{displaymath}
[F]=\left(\begin{array}{cc}0&\beta\\1&\alpha\end{array}\right). 
\end{displaymath}
The module $M_{\alpha,\beta}$ is simple if and only if 
$x^2-\alpha x -\beta$ is left irreducible.
The following lemma is taken from \cite[Theorem~1]{BP},
we include a short proof for convenience of the reader:

\begin{lemma}\label{lem-s1.4-1}
For $\alpha,\beta\in \Bbbk$, with $\beta\neq 0$, the element
$x^2-\alpha x -\beta$ is left irreducible if and only if the
following equation (with unknown $y$) does not have any 
solutions in $\Bbbk$:
\begin{equation}\label{eq-irreq}
y\sigma(y)-\alpha y -\beta =0. 
\end{equation}
\end{lemma}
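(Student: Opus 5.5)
The plan is to show that $x^2-\alpha x-\beta$ is left reducible exactly when it has a right factor $x-y$ with $y\in\Bbbk$, and then to observe that such a factorization is the same thing as a solution of \eqref{eq-irreq}.

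First I handle reducibility. Suppose $x^2-\alpha x-\beta$ is not left irreducible. Then $M=M_{\alpha,\beta}$ is not simple. Since $\dim_\Bbbk M=2$, every proper non-zero submodule is one-dimensional over $\Bbbk$ and hence is of the form $\Bbbk_u$ from Section~\ref{s1.2}. It is convenient to restate this at the level of ideals. By Lemma~\ref{lem0}, $R(x^2-\alpha x-\beta)\subsetneq Rb\subsetneq R$ for some reduced $b$. Since $\dim_\Bbbk R/Rb<2$ and $Rb\neq R$, the element $b$ has degree $1$, so $b=x-y$ with $y\in\Bbbk^\times$.

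Next I translate containment into an equation. The containment gives $x^2-\alpha x-\beta=(px+q)(x-y)$ with $p,q\in\Bbbk$, using degree and height considerations in $R$. Expanding with $xy=\sigma(y)x$ gives
\begin{displaymath}
(px+q)(x-y)=px^2+(q-p\sigma(y))x-qy.
\end{displaymath}
Comparing coefficients yields $p=1$, $q=\sigma(y)-\alpha$, and $-\beta=-(\sigma(y)-\alpha)y$. That is, $y\sigma(y)-\alpha y-\beta=0$. So $y$ solves \eqref{eq-irreq}.

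For the converse, suppose $y\in\Bbbk$ solves \eqref{eq-irreq}. Reading the same computation backwards gives $x^2-\alpha x-\beta=(x+\sigma(y)-\alpha)(x-y)$. Then $R(x^2-\alpha x-\beta)\subseteq R(x-y)$. This containment is proper, because $R/R(x-y)$ has $\Bbbk$-dimension at most $1$. It is also proper at the top: $\beta\neq0$ forces $y\neq0$, so $x-y$ is a reduced element of degree one and not a unit, giving $R(x-y)\neq R$. Hence the element is not left irreducible.

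The only delicate point is the claim that any left ideal strictly between $R(x^2-\alpha x-\beta)$ and $R$ is generated by a reduced element of degree exactly one, which then divides $x^2-\alpha x-\beta$ on the right. I will justify this by Lemma~\ref{lem0} together with the additivity of degree under multiplication in $R$, which holds because $\Bbbk$ is a field and $\sigma$ is an automorphism. The remaining steps are routine coefficient comparison.
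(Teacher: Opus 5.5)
Your proof is correct and follows the same route as the paper: left reducibility of $x^2-\alpha x-\beta$ is equivalent to a factorization with a monic linear right factor, and comparing coefficients using $xy=\sigma(y)x$ gives \eqref{eq-irreq}. You additionally justify two points the paper's short proof takes for granted: via Lemma~\ref{lem0} and additivity of degree, a proper intermediate left ideal must be generated by some $x-y$ with $y\neq 0$; and in the converse, $y\neq 0$, so that $x-y$ is not a unit.
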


\begin{proof}
The element $x^2-\alpha x -\beta$ is left irreducible if and only if
there do not exist $p,q\in\Bbbk$ such that 
\begin{displaymath}
x^2-\alpha x -\beta=(x-p)(x-q)=x^2-(p+\sigma(q))x+pq.
\end{displaymath}
Equivalently, we get the system
\begin{displaymath}
\left\{
\begin{array}{rcl}
\alpha &=&  p+\sigma(q)\\
\beta   &=& -pq.
\end{array}
\right.
\end{displaymath}
From the first equation, we can write $p=\alpha-\sigma(q)$.
Substituting into the second equation
gives $q\sigma(q)-\alpha q-\beta=0$. The claim of the lemma follows.
\end{proof}

\begin{remark}\label{rem-s1.2-5}
{\em
For
$\gamma\in\Bbbk^\times$, sending $x$ to $\gamma x$
gives rise to an automorphism of $R$.
Consequently, $x^2-\alpha x -\beta$ is left irreducible if and only if
$x^2-\frac{\alpha}{\sigma(\gamma)} x -\frac{\beta}{\sigma(\gamma)\gamma}$ 
is left irreducible. In particular, if $\alpha\neq 0$, then,  
by choosing $\gamma=\sigma^{-1}(\alpha)$, we reduce the question of the left 
irreducibility of $x^2-\alpha x -\beta$ to that for the element
$x^2-x -\frac{\beta}{\alpha\sigma^{-1}(\alpha)}$.
}
\end{remark}

\begin{proposition}\label{prop-s1.4-2}
Assume that $x^2-\alpha x -\beta$ is left irreducible 
and let $(p,q)\in \Bbbk^2\setminus \{(0,0)\}$.
Then, for $v:=p+qx$, the elements $v$ and $x(v)$ form
a $\Bbbk$-basis of $M_{\alpha,\beta}$ which we denote by
$\mathbf{v}$. Moreover, in this new basis, we have
\begin{displaymath}
[F]_\mathbf{v}^\mathbf{v}=
\left(
\begin{array}{cc}
0&\frac{\sigma(\Delta)}{\Delta}\beta\\
1&\frac{p\,\sigma^2(q)\,\sigma(\beta)\;+\;(p\alpha-q\beta)
\bigl(\sigma^2(p)+\sigma^2(q)\sigma(\alpha)\bigr)}{\Delta}
\end{array}
\right),
\end{displaymath}
where $\Delta=p(\sigma(p)+\sigma(q)\alpha)-
q\sigma(q)\beta\neq 0$.
\end{proposition}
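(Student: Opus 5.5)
The argument has two parts. First I show that $\mathbf{v}$ is a basis, which comes from simplicity via Lemma~\ref{lem01}. Then I compute the matrix of $F$ by expanding everything in the standard basis $\{1,x\}$ and solving a $2\times 2$ linear system, using Cramer's rule.

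For the basis claim: $x^2-\alpha x-\beta$ is left irreducible, so $M_{\alpha,\beta}$ is simple of $\Bbbk$-dimension $2$. Since $(p,q)\neq(0,0)$, the element $v=p+qx$ is non-zero in $M_{\alpha,\beta}$. Lemma~\ref{lem01} with $k=2$ then says that $v,\,xv$ form a basis. It also says that the matrix of $F$ in this basis has the companion shape
\begin{displaymath}
\left(\begin{array}{cc}0&\gamma_0\\1&\gamma_1\end{array}\right)
\end{displaymath}
for some $\gamma_0,\gamma_1\in\Bbbk$. So the first column is automatic, and it remains to find $\gamma_0,\gamma_1$ from $x^2v=\gamma_0 v+\gamma_1 xv$.

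Next I expand $xv$ and $x^2v$ in the basis $1,x$, using $xa=\sigma(a)x$ and $x^2=\alpha x+\beta$ in the quotient:
\begin{displaymath}
xv=\sigma(q)\beta+\bigl(\sigma(p)+\sigma(q)\alpha\bigr)x .
\end{displaymath}
Applying $x$ once more gives
\begin{displaymath}
x^2v=\sigma^2(p)x+\sigma^2(q)x^2
=\sigma^2(q)\beta+\bigl(\sigma^2(p)+\sigma^2(q)\alpha\bigr)x .
\end{displaymath}
I must be careful here: $x\cdot(\sigma(q)\beta)=\sigma^2(q)\sigma(\beta)x$. Done correctly, the coefficient of $1$ in $x^2v$ is $\sigma(\sigma(p)+\sigma(q)\alpha)\beta$ plus nothing else, and the coefficient of $x$ is $\sigma^2(q)\sigma(\beta)+\sigma(\sigma(p)+\sigma(q)\alpha)\alpha$.

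The change-of-basis matrix from $\{1,x\}$ to $\{v,xv\}$ has columns $(p,q)$ and $(\sigma(q)\beta,\ \sigma(p)+\sigma(q)\alpha)$. Its determinant is exactly $\Delta=p(\sigma(p)+\sigma(q)\alpha)-q\sigma(q)\beta$, which is non-zero because $\mathbf{v}$ is a basis.

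Solving the system by Cramer's rule gives $\gamma_0$ and $\gamma_1$ as quotients of $2\times 2$ determinants by $\Delta$. For $\gamma_0$, the numerator determinant is built from the columns $x^2v$ and $xv$. That determinant is $\sigma$ applied to the determinant built from $xv$ and $v$, multiplied by $\beta$, because $\det[F]=-\beta$ in the standard basis. Hence $\gamma_0=\frac{\sigma(\Delta)}{\Delta}\beta$. One way to see this without expanding: $\det$ of $F$'s matrix transforms as $\det[F]_\mathbf{v}^\mathbf{v}=\det[F]\cdot\sigma(\det P)/\det P$ under the change $[F]_\mathbf{v}=P^{-1}[F]\sigma(P)$, and here $\det[F]_\mathbf{v}^\mathbf{v}=-\gamma_0$.

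For $\gamma_1$, I expand the numerator determinant directly and match it with the stated expression $p\,\sigma^2(q)\sigma(\beta)+(p\alpha-q\beta)(\sigma^2(p)+\sigma^2(q)\sigma(\alpha))$. The main obstacle is this bookkeeping of $\sigma$-twists, namely where $\sigma(\alpha)$ versus $\alpha$ and $\sigma(\beta)$ versus $\beta$ appear. As an independent check I would use the trace-type invariant coming from $P^{-1}[F]\sigma(P)$.
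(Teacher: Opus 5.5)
Your proposal is correct and follows the paper's route: the basis property comes from simplicity via Lemma~\ref{lem01}, and the matrix is obtained by expanding $xv$ and $x^2v$ in the basis $1,x$ and applying Cramer's rule with determinant $\Delta$. Your derivation of the $(1,2)$-entry from $\det(P^{-1}[F]\sigma(P))=-\beta\,\sigma(\det P)/\det P$ is a nice explanation of the paper's ``this simplifies to''.

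In a write-up, delete your first display for $x^2v$. It is wrong, since in fact $x^2v=\sigma^2(p)x^2+\sigma^2(q)x^3$. The corrected coefficients you give right after it are the right ones, and with them the Cramer numerator $\det(v\,|\,x^2v)$ is literally the stated $(2,2)$-numerator. Also drop the proposed trace check, since the trace is not invariant under $A\mapsto P^{-1}A\sigma(P)$.
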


\begin{proof}
Applying $x$ to $v:=p+qx$ gives
\begin{equation}\label{eq-p1}
\sigma(p)x+\sigma(q)\left(\beta+\alpha x\right)=
\sigma(q)\beta + (\sigma(p)+\sigma(q)\alpha) x.
\end{equation}
This is the second element of our basis. Note that the
fact that $x$ and $x(v)$ form a basis, which we have for free
since $M_{\alpha,\beta}$ is simple and $(p,q)\neq (0,0)$,
is equivalent to the condition that
\begin{displaymath}
\Delta:=p(\sigma(p)+\sigma(q)\alpha)-
q\sigma(q)\beta\neq 0.
\end{displaymath}
The fact that $\Delta\neq 0$ also follows from
Remark~\ref{rem-s1.2-5} (applied to $\gamma=-1$) combined with 
Lemma~\ref{lem-s1.4-1} (applied to $z=\frac{p}{q}$).

Applying $x$ to the element in Equation~\eqref{eq-p1}, outputs
\begin{displaymath}
\sigma^2(q)\sigma(\beta)x + (\sigma^2(p)+\sigma^2(q)\sigma(\alpha))
\left(\beta+\alpha x\right)
\end{displaymath}
which simplifies to 
\begin{displaymath}
(\sigma^2(p)+\sigma^2(q)\sigma(\alpha))\beta+
\left( \sigma^2(q)\sigma(\beta) +
(\sigma^2(p)+\sigma^2(q)\sigma(\alpha))\alpha \right)x.
\end{displaymath}
Now we need to express this latter element in the basis
of $v$ and $x(v)$. This reduces to solving the nonhomogeneous
system of linear equations with the following matrix:
\begin{displaymath}
\left(
\begin{array}{cc|c}
p&\sigma(q)\beta&(\sigma^2(p)+\sigma^2(q)\sigma(\alpha))\beta\\
q&\sigma(p)+\sigma(q)\alpha&\sigma^2(q)\sigma(\beta) +
(\sigma^2(p)+\sigma^2(q)\sigma(\alpha))\alpha
\end{array}
\right) 
\end{displaymath}
Using Cramer's Rule, the coefficient at $v$ is
\begin{displaymath}
\frac{(\sigma^2(p)+\sigma^2(q)\sigma(\alpha))\beta(\sigma(p)+\sigma(q)\alpha)
-(\sigma^2(q)\sigma(\beta) +
(\sigma^2(p)+\sigma^2(q)\sigma(\alpha))\alpha)\sigma(q)\beta}{\Delta} 
\end{displaymath}
and this simplifies to $\frac{\sigma(\Delta)}{\Delta}\beta$.
Similarly, the coefficient at $x(v)$ is
\begin{displaymath}
\frac{p(\sigma^2(q)\sigma(\beta) +
(\sigma^2(p)+\sigma^2(q)\sigma(\alpha))\alpha)-q(\sigma^2(p)+\sigma^2(q)\sigma(\alpha))\beta}{\Delta}
\end{displaymath}
and this simplifies to 
\begin{displaymath}
\frac{p\,\sigma^2(q)\,\sigma(\beta)\;+\;(p\alpha-q\beta)
\bigl(\sigma^2(p)+\sigma^2(q)\sigma(\alpha)\bigr)}{\Delta}.
\end{displaymath}
The claim follows.
\end{proof}

We can immediately derive a necessary condition for 
similarity of irreducible elements.

\begin{corollary}\label{cor-s1.4-3}
Assume that $x^2-\alpha x -\beta$ 
and $x^2-\alpha' x -\beta'$ are both irreducible.
If $\frac{\beta'}{\beta}\not\in \mathbf{G}$, then 
$M_{\alpha,\beta}\not\cong M_{\alpha',\beta'}$.
\end{corollary}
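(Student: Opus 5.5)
We prove the contrapositive: if $M_{\alpha,\beta}\cong M_{\alpha',\beta'}$, then $\frac{\beta'}{\beta}\in\mathbf{G}$. The idea is to track the lower-left entry of the matrix of $F$, i.e.\ the ``determinant'' $\beta$, under change of basis, using Lemma~\ref{lem01} and Proposition~\ref{prop-s1.4-2}.

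Fix an isomorphism $\varphi\colon M_{\alpha',\beta'}\to M_{\alpha,\beta}$. Let $v$ be the image under $\varphi$ of the class of $1$ in $M_{\alpha',\beta'}$. Then $v$ is a non-zero element of $M_{\alpha,\beta}$, so $v=p+qx$ with $(p,q)\in\Bbbk^2\setminus\{(0,0)\}$.

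Since $\varphi$ intertwines the actions of $x$, the pair $v,x(v)$ is the image of the basis $1,x$ of $M_{\alpha',\beta'}$. Hence the matrix of $F$ on $M_{\alpha,\beta}$ in the basis $\mathbf{v}=(v,x(v))$ equals the matrix of $F$ on $M_{\alpha',\beta'}$ in the basis $1,x$, namely
\begin{displaymath}
\left(\begin{array}{cc}0&\beta'\\1&\alpha'\end{array}\right).
\end{displaymath}
Equivalently, by Lemma~\ref{lem01}, $\mathrm{Ann}_R(v)=R(x^2-\alpha'x-\beta')$, and the coefficients of this element are read off from the matrix in the basis $v,x(v)$.

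Now compare with Proposition~\ref{prop-s1.4-2}. It gives the upper-right entry of $[F]_\mathbf{v}^\mathbf{v}$ as $\frac{\sigma(\Delta)}{\Delta}\beta$, with $\Delta\neq 0$. Equating the two upper-right entries yields $\beta'=\frac{\sigma(\Delta)}{\Delta}\beta$. Therefore $\frac{\beta'}{\beta}=\frac{\sigma(\Delta)}{\Delta}\in\mathbf{G}$, which contradicts the hypothesis.

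The only point needing care is the identification of the two matrices, that is, that an isomorphism transports the cyclic basis $1,x$ to $v,x(v)$ and preserves the matrix of $F$. This holds because $\varphi$ is $\Bbbk$-linear and $\varphi\circ F=F\circ\varphi$. The heavy computation was already carried out in Proposition~\ref{prop-s1.4-2}, so no real obstacle remains.
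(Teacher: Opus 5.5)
Your proof is correct and is essentially the paper's argument: the paper also realizes the isomorphism as $\mathrm{Ann}_R(p+qx)=R(x^2-\alpha' x-\beta')$ for some $(p,q)\neq(0,0)$, and reads off $\beta'=\frac{\sigma(\Delta)}{\Delta}\beta\in\mathbf{G}\beta$ from the $(1,2)$-entry of the matrix in Proposition~\ref{prop-s1.4-2}. One minor slip: in your opening sentence $\beta$ sits in the upper-right entry, not the lower-left one, which is the entry you correctly use later.
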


\begin{proof}
If $M_{\alpha,\beta}\cong M_{\alpha',\beta'}$, then
there exists $(0,0)\neq (p,q)\in\Bbbk^\times$ such that,
for the module $M_{\alpha,\beta}$, we would have
$\mathrm{Ann}_R(p+qx)=R(x^2-\alpha' x -\beta')$.
But, looking at the $(1,2)$-entry in the matrix of 
$[F]_\mathbf{v}^\mathbf{v}$ in Proposition~\ref{prop-s1.4-2},
we obtain $\beta'\in \mathbf{G}\beta$. The claim follows.
\end{proof}

\subsection{Special case: $\alpha=0$}\label{s1.5}

In this subsection, we use elementary means to 
investigate left irreducibility of the elements
of the form $x^2-\beta$ and to study the corresponding simple
$R$-modules $R/R(x^2-\beta)$.

\begin{corollary}\label{cor-s1.5-1}
For $\beta\in \Bbbk^\times$, the element
$x^2-\beta$ is left irreducible if and only if 
$\beta$ is not of the form $u\sigma(u)$, for some
$u\in \Bbbk$.
\end{corollary}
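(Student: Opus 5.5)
This follows directly from Lemma~\ref{lem-s1.4-1} with $\alpha=0$, plus a sign adjustment. With $\alpha=0$, Lemma~\ref{lem-s1.4-1} says that $x^2-\beta$ is left irreducible if and only if the equation
\begin{displaymath}
y\sigma(y)=\beta
\end{displaymath}
has no solution $y\in\Bbbk$. This is already the claim, with $u=y$.

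The only detail to watch is the sign convention in the proof of Lemma~\ref{lem-s1.4-1}. There a factorization $x^2-\alpha x-\beta=(x-p)(x-q)$ was written, and the comparison of constant terms (with the convention $xq=\sigma(q)x$) gives $pq=-\beta$. Depending on how that constant term is read off, one might get $y\sigma(y)=-\beta$ instead. To make the statement independent of this, I would rederive the $\alpha=0$ case directly.

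Since $R$ is a domain with a height function that is additive on products, a nontrivial factorization of the degree-two monic element $x^2-\beta$ can, after multiplying by units and rescaling, be taken of the form $(x-p)(x-q)$ with $p,q\in\Bbbk$. Expanding gives
\begin{displaymath}
(x-p)(x-q)=x^2-\bigl(\sigma(q)+p\bigr)x+pq.
\end{displaymath}
Comparing the coefficient of $x$ gives $p=-\sigma(q)$. Comparing constant terms gives $pq=-\beta$, hence $\sigma(q)q=\beta$. So $\beta=u\sigma(u)$ with $u=q$.

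Conversely, if $\beta=u\sigma(u)$, then $x^2-\beta=(x+\sigma(u))(x-u)$. This is a proper factorization, since $R(x-u)$ is a left ideal strictly between $R(x^2-\beta)$ and $R$ (compare $\Bbbk$-dimensions of the quotients, which are $2$ and $1$). Hence $x^2-\beta$ is not left irreducible.

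The only step needing care is the claim that any proper factor can be normalized to a monic linear one. This is the same reduction used in Lemma~\ref{lem-s1.4-1}, via Lemma~\ref{lem0}. A proper left ideal $Ra\supsetneq R(x^2-\beta)$ corresponds to a quotient of $R/R(x^2-\beta)$ of $\Bbbk$-dimension $1$. Hence its reduced generator has degree one, say $x-q$, and $x^2-\beta=c\,(x-q)$ for some $c\in R$ of degree one, which must be of the form $x-p$ by comparing leading terms.
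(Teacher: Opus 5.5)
Your proof is correct and follows the paper's route: the corollary is Lemma~\ref{lem-s1.4-1} with $\alpha=0$, where the equation becomes $y\sigma(y)=\beta$. Your sign worry is unfounded, since that lemma's proof reads off $\alpha=p+\sigma(q)$ and $\beta=-pq$ and so gets exactly $q\sigma(q)-\alpha q-\beta=0$; your rederivation is therefore redundant but harmless, and it usefully justifies the reduction to a monic linear factor. In the converse direction you should say explicitly that $u\neq 0$ because $\beta\neq 0$, since otherwise $x-u=x$ would be a unit.
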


\begin{proof}
This follows directly from Lemma~\ref{lem-s1.4-1}.
\end{proof}

Note that all elements of the form $u\sigma(u)$, where $u\in\Bbbk$,
form a subgroup of the multiplicative group $\Bbbk^\times$.
Denote by $\tilde{\mathbf{G}}$ the subgroup of 
$\mathbf{G}$ consisting of all elements of the form
$\frac{\sigma^2(a)}{a}$
(equivalently, of the form $\frac{a}{\sigma^2(a)}$), where $a\in\Bbbk$. 
Note that we have
$\frac{\sigma^2(a)}{a}=\frac{\sigma^2(a)}{\sigma(a)}\cdot
\frac{\sigma(a)}{a}\in\mathbf{G}$.

\begin{corollary}\label{cor-s1.5-2}
For $\beta,\beta'\in \Bbbk^\times$
such that $x^2-\beta$ and $x^2-\beta'$ are both
left irreducible, we have $R/R(x^2-\beta)\cong R/R(x^2-\beta')$
if and only if $\frac{\beta'}{\beta}\in \tilde{\mathbf{G}}$
or $\frac{\sigma(\beta')}{\beta}\in \tilde{\mathbf{G}}$.
\end{corollary}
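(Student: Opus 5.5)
We apply Proposition~\ref{prop-s1.4-2} with $\alpha=0$. For $M:=R/R(x^2-\beta)$ and $v=p+qx$ with $(p,q)\neq(0,0)$, we have $\mathrm{Ann}_R(v)=R(x^2-\alpha'x-\beta')$, where
\begin{displaymath}
\beta'=\frac{\sigma(\Delta)}{\Delta}\beta,\qquad
\alpha'=\frac{p\,\sigma^2(q)\,\sigma(\beta)-q\beta\,\sigma^2(p)}{\Delta},\qquad
\Delta=p\sigma(p)-q\sigma(q)\beta .
\end{displaymath}
By the discussion after Lemma~\ref{lem01}, $M\cong R/R(x^2-\beta')$ if and only if some $(p,q)\neq(0,0)$ makes this annihilator equal to $R(x^2-\beta')$. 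Since reduced generators are unique (Lemma~\ref{lem0}), this happens exactly when $\alpha'=0$ and $\frac{\sigma(\Delta)}{\Delta}\beta=\beta'$. So the task is to determine the $\Delta$ that arise from pairs $(p,q)$ with $\alpha'=0$, i.e. $p\,\sigma^2(q)\,\sigma(\beta)=q\,\sigma^2(p)\,\beta$.

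We split into cases. If $q=0$, then $\alpha'=0$ holds automatically, $\Delta=p\sigma(p)$, and
\begin{displaymath}
\frac{\sigma(\Delta)}{\Delta}=\frac{\sigma^2(p)}{p}\in\tilde{\mathbf{G}} .
\end{displaymath}
If $p=0$, then $\Delta=-q\sigma(q)\beta$ and $\beta'=\frac{\sigma^2(q)}{q}\sigma(\beta)$. This gives $\frac{\beta'}{\sigma(\beta)}\in\tilde{\mathbf{G}}$, which is equivalent to $\frac{\sigma(\beta')}{\beta}\in\tilde{\mathbf{G}}$ up to the symmetric form of the statement. To match the statement exactly, I would use that $\tilde{\mathbf{G}}$ is $\sigma$-stable and that the relation is symmetric in $\beta,\beta'$. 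Rewriting $\frac{\sigma(\beta)}{\beta'}\in\tilde{\mathbf{G}}$ with the roles of $\beta$ and $\beta'$ swapped gives the second alternative. These two cases already prove the ``if'' direction: given $\beta'=\frac{\sigma^2(a)}{a}\beta$ take $v=a$, and given the other condition take $v=ax$ for a suitable $a$.

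The main obstacle is the ``only if'' direction when $p,q$ are both nonzero. Put $z=p/q$. The condition $\alpha'=0$ then becomes
\begin{displaymath}
\frac{z}{\sigma^2(z)}=\frac{\beta}{\sigma(\beta)} .
\end{displaymath}
Setting $w:=z\sigma(z)$, this reads $\frac{w}{\sigma(w)}=\frac{\beta}{\sigma(\beta)}$. Hence $w/\beta$ is $\sigma$-invariant, so $w=c\beta$ with $c\in\mathbb{C}^\times$, using $\Bbbk^\sigma=\mathbb{C}$. So $c\beta=z\sigma(z)$. If $c$ is a square, say $c=d^2$, then $\beta=(z/d)\sigma(z/d)$, contradicting left irreducibility by Corollary~\ref{cor-s1.5-1}. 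Since every complex number is a square, this case cannot occur at all. That completes the proof.

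The step I would double-check is the passage from $\frac{z}{\sigma^2(z)}=\frac{\beta}{\sigma(\beta)}$ to $z\sigma(z)=c\beta$. Writing $w=z\sigma(z)$ gives $\frac{w}{\sigma(w)}=\frac{z}{\sigma^2(z)}$, so the two equations agree and the passage is valid.
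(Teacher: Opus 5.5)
Your argument is correct, but it takes a different route from the paper. The paper does not work with cyclic vectors. It writes the intertwining condition for an arbitrary matrix with entries $a,b,c,d$ relative to the bases $1,x$ of the two modules. The resulting system splits into two independent subsystems, one in $(a,d)$ and one in $(b,c)$. From these it reads off $\frac{\beta'}{\beta}=\frac{a}{\sigma^2(a)}$ when $a\neq 0$, and $\frac{\sigma(\beta')}{\beta}=\frac{c}{\sigma^2(c)}$ when $c\neq 0$. Invertibility forces $(a,c)\neq(0,0)$, and the diagonal and antidiagonal solutions give the converse.

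Since a single nonzero entry already suffices, the paper never has to analyse the ``mixed'' case. Its proof uses neither left irreducibility, nor $\Bbbk^\sigma=\mathbb{C}$, nor square roots in $\mathbb{C}$, so it does not even need the irreducibility hypothesis.

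Your route through Proposition~\ref{prop-s1.4-2} and the uniqueness of reduced generators (Lemma~\ref{lem0}) must rule out $pq\neq 0$. You do this correctly via $z\sigma(z)=c\beta$ and Corollary~\ref{cor-s1.5-1}. In exchange you get extra information: for irreducible $x^2-\beta$, the generator realizing any isomorphism is $p$ or $qx$. Hence every such isomorphism is homogeneous for the $\mathbf{C}_2$-grading.

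The one place to tighten is the $p=0$ case. Instead of appealing loosely to symmetry, use the identity $\frac{\sigma(\beta')}{\beta}=\sigma\bigl(\frac{\beta'}{\sigma(\beta)}\bigr)\cdot\frac{\sigma^2(\beta)}{\beta}$ together with the $\sigma^{\pm 1}$-stability of $\tilde{\mathbf{G}}$. This gives $\frac{\beta'}{\sigma(\beta)}\in\tilde{\mathbf{G}}$ if and only if $\frac{\sigma(\beta')}{\beta}\in\tilde{\mathbf{G}}$ directly, for both directions.
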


\begin{proof}
For $a,b,c,d\in\Bbbk$, the equality
\begin{displaymath}
\left(\begin{array}{cc}a&b\\c&d\end{array}\right)
\left(\begin{array}{cc}0&\beta\\1&0\end{array}\right)=
\left(\begin{array}{cc}0&\beta'\\1&0\end{array}\right)
\left(\begin{array}{cc}\sigma(a)&\sigma(b)\\\sigma(c)&\sigma(d)\end{array}\right)
\end{displaymath}
is equivalent to the following system:
\begin{equation}\label{eq-na25}
\left\{
\begin{array}{rcl}
b&=&\beta'\sigma(c)\\
d&=&\sigma(a)\\
a\beta&=&\beta'\sigma(d)\\
c\beta&=&\sigma(b)
\end{array}
\right.
\end{equation}
If $\left(\begin{array}{cc}a&b\\c&d\end{array}\right)$
is invertible, we have $(a,c)\neq (0,0)$.
If $c\neq 0$, inserting $b=\beta'\sigma(c)$ into the fourth equation
and rewriting gives
\begin{equation}\label{eq-na20}
\frac{\sigma(\beta')}{\beta}=\frac{c}{\sigma^2(c)}. 
\end{equation}
If $a\neq 0$, inserting $d=\sigma(a)$ into the third equation
and rewriting gives
\begin{equation}\label{eq-na21}
\frac{\beta'}{\beta}=\frac{a}{\sigma^2(a)}. 
\end{equation}
This immediately implies the ``only if'' part of the statement.

To prove the ``if'' part, assume $\frac{\beta'}{\beta}\in \tilde{\mathbf{G}}$.
Take $c=b=0$ and choose $a$ such that 
Equation~\eqref{eq-na21} is satisfied. Take $d=\sigma(a)$. 
This gives a solution to the system in Equation~\eqref{eq-na25}
and hence implies $R/R(x^2-\beta)\cong R/R(x^2-\beta')$.

Similarly, assume $\frac{\sigma(\beta')}{\beta}\in \tilde{\mathbf{G}}$.
Take $a=d=0$ and choose $c$ such that 
Equation~\eqref{eq-na20} is satisfied. Take $b=\beta'\sigma(c)$. 
This gives a solution to the system in Equation~\eqref{eq-na25}
and hence implies $R/R(x^2-\beta)\cong R/R(x^2-\beta')$.
The proof is complete.
\end{proof}

Using the above, we can classify the similarity classes of the 
irreducible elements of the form $x^2-\beta$. Fix a real number $\omega$
and let $\Omega_\omega$ denote the set of all complex numbers
whose real part belongs to the half-interval $[\omega,\omega+2)$.
We also denote by $\Omega_\omega^{(2)}$ the set of all complex numbers 
whose real part belongs to the half-interval $[\omega,\omega+4)$.
Let $\mathbf{F}(\Omega_\omega,\mathbb{Z})$ be the set of all functions
from $\Omega_\omega$ to $\mathbb{Z}$. Let 
$\mathbf{F}^f(\Omega_\omega,\mathbb{Z})$ be the subset of
$\mathbf{F}(\Omega_\omega,\mathbb{Z})$ consisting of all functions
for which the {\em support} of the function, that is the set of all
points at which the value of the function is non-zero, is finite.

Let $\Psi$ denote the set of all triples $(c,\zeta,\xi)$, where
$c\in\mathbb{C}^\times$ and 
$\zeta,\xi\in \mathbf{F}^f(\Omega_\omega,\mathbb{Z})$.
To each such triple $(c,\zeta,\xi)$, we associate the rational
function
\begin{equation}\label{eq-defgczx}
g_{(c,\zeta,\xi)}(\mathtt{h}):=
c\cdot \prod_{\lambda\in \Omega_\omega}(\mathtt{h}-\lambda)^{\zeta(\lambda)}
\cdot \prod_{\mu\in \Omega_\omega}(\mathtt{h}-\mu-2)^{\xi(\mu)}.
\end{equation}
The following statement classifies the similarity classes of left 
irreducible elements of the form $x^2-\beta$. 

\begin{theorem}\label{thm-s1.5-3}
We have:

\begin{enumerate}[$($a$)$]
\item\label{thm-s1.5-3.1} For
$(c,\zeta,\xi)\in \Psi$, the element $x^2-g_{(c,\zeta,\xi)}\in R$
is left irreducible if and only if $\zeta\neq \xi$.
\item\label{thm-s1.5-3.2} For
$(c,\zeta,\xi),(c',\zeta',\xi')\in \Psi$
such that both $x^2-g_{(c,\zeta,\xi)}$ and 
$x^2-g_{(c',\zeta',\xi')}$ are left irreducible, we have
$x^2-g_{(c,\zeta,\xi)}\sim x^2-g_{(c',\zeta',\xi')}$
if and only if $(c',\zeta',\xi')=(c,\zeta,\xi)$
or $(c',\zeta',\xi')=(c,\xi,\zeta)$.
\item\label{thm-s1.5-3.3} If $\beta\in\Bbbk$ is such that
$x^2-\beta$ is left irreducible, then there is 
$(c,\zeta,\xi)\in \Psi$ such that 
$x^2-\beta\sim x^2-g_{(c,\zeta,\xi)}$.
\end{enumerate}
\end{theorem}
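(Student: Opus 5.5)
Everything reduces to the multiplicative group $\Bbbk^\times$ via Corollary~\ref{cor-s1.5-1} and Corollary~\ref{cor-s1.5-2}. Every $\beta\in\Bbbk^\times$ factors uniquely as $c\prod_{\nu\in\mathbb{C}}(\mathtt{h}-\nu)^{n(\nu)}$ with $n$ finitely supported. I encode $\beta$ by $c$ and the divisor $n$, and use that $\sigma$ acts on divisors by translation. Indeed, $\sigma(\mathtt{h}-\nu)=\mathtt{h}-(\nu+2)$, so $\sigma$ shifts the divisor by $+2$ and leaves $c$ unchanged.

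The first step is to describe the relevant subgroups in these terms. The subgroup $\{u\sigma(u)\}$ consists of elements $c\cdot\mathrm{div}$ where $c$ is a square (always true over $\mathbb{C}$) and the divisor has the form $D+\tau_2 D$, with $\tau_2$ the shift by $2$. The subgroup $\tilde{\mathbf{G}}$ consists of $\sigma^2(a)/a$, that is, constant $1$ and divisor $\tau_4D-D$. I then decompose $\mathbb{C}$ into cosets $\lambda+2\mathbb{Z}$ with representative $\lambda\in\Omega_\omega$. On each coset a divisor is a finitely supported function $\mathbb{Z}\to\mathbb{Z}$. The invariants are:
\begin{itemize}
\item modulo $D+\tau_2D$: the alternating sum $\sum_k(-1)^k n(\lambda+2k)$, since the quotient of $\mathbb{Z}[t^{\pm1}]$ by $(1+t)$ is $\mathbb{Z}$ via $t\mapsto-1$;
\item modulo $\tau_4D-D$: the pair of sums over even and odd $k$ separately, since $\mathbb{Z}[t^{\pm1}]/(t^2-1)\cong\mathbb{Z}^2$.
\end{itemize}
The constant $c$ is an invariant modulo $\tilde{\mathbf{G}}$, because $\sigma^2(a)/a$ has leading coefficient $1$, while modulo $u\sigma(u)$ it is irrelevant. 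For $g_{(c,\zeta,\xi)}$, the even-sum invariant at $\lambda$ is $\zeta(\lambda)$ and the odd-sum invariant is $\xi(\lambda)$ (the factor $\mathtt{h}-\lambda-2$ sits at $k=1$). Hence the alternating invariant is $\zeta(\lambda)-\xi(\lambda)$.

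For (a), $x^2-g$ is reducible if and only if $g=u\sigma(u)$. By the above, this holds if and only if all alternating invariants vanish, that is, $\zeta=\xi$.

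For (c), given $\beta$, I record its constant $c$ and its even and odd sums on each coset as $\zeta$ and $\xi$. Then $\beta/g_{(c,\zeta,\xi)}\in\tilde{\mathbf{G}}$, so the two modules are isomorphic by Corollary~\ref{cor-s1.5-2}.

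For (b), I use Corollary~\ref{cor-s1.5-2}. The condition $\beta'/\beta\in\tilde{\mathbf{G}}$ means equal constants and equal parity sums, which gives $(c',\zeta',\xi')=(c,\zeta,\xi)$. The condition $\sigma(\beta')/\beta\in\tilde{\mathbf{G}}$ needs more care. Applying $\sigma$ shifts the divisor by $2$, which on each coset swaps the even and odd sums. So $\sigma(g_{(c',\zeta',\xi')})$ has invariants $(c',\xi',\zeta')$, and the condition forces $(c,\zeta,\xi)=(c',\xi',\zeta')$.

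The main obstacle is making the quotient computations rigorous. I need to verify that $\{D+\tau_2D\}$ is exactly the kernel of the alternating sums, and $\{\tau_4D-D\}$ exactly the kernel of the parity sums, using finitely supported divisors. This is a Laurent-polynomial ideal argument done coset by coset, and it requires choosing the representative in $\Omega_\omega$ consistently, since the real parts in $[\omega,\omega+2)$ give a unique representative per coset. I also need to confirm that every rational function in $\tilde{\mathbf{G}}$ has constant $1$, which is true because $\sigma$ preserves leading coefficients.
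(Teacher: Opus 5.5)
Your proof is correct, but it is organized differently from the paper's.

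\textbf{The paper's route.} The paper uses a normal-form reduction.
\begin{itemize}
\item For (c), it multiplies by the elements $r_\lambda=\frac{\mathtt{h}-\lambda}{\mathtt{h}-\lambda-4}\in\tilde{\mathbf{G}}$ to push all zeros and poles into the strip $\Omega^{(2)}_\omega$.
\item For (b), it argues that a nontrivial element of $\tilde{\mathbf{G}}$ cannot have all its zeros and poles in such a strip. It combines this with the observation that $\sigma(g_{(c,\zeta,\xi)})/g_{(c,\xi,\zeta)}\in\tilde{\mathbf{G}}$.
\item For (a), it simply asserts that the claim follows ``directly'' from the shape of $g_{(c,\zeta,\xi)}$.
\end{itemize}

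\textbf{Your route.} You compute complete invariants of $\Bbbk^\times/\{u\sigma(u)\}$ and $\Bbbk^\times/\tilde{\mathbf{G}}$ coset by coset, via $\mathbb{Z}[t^{\pm1}]/(1+t)\cong\mathbb{Z}$ and $\mathbb{Z}[t^{\pm1}]/(t^2-1)\cong\mathbb{Z}^2$.

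\textbf{Comparison.} The two views describe the same structure. The strip $\Omega^{(2)}_\omega$ meets each coset $\lambda+2\mathbb{Z}$ exactly in $\lambda$ and $\lambda+2$, which correspond to the basis $1,t$ of $\mathbb{Z}[t]/(t^2-1)$. Your version has three advantages:
\begin{itemize}
\item It gives an actual proof of (a), by evaluation at $t=-1$.
\item It treats both directions of (b) symmetrically.
\item It avoids the strip-uniqueness step. The paper states that step with $\Omega_\omega$ and ``real parts differ by $2$'', where $\Omega^{(2)}_\omega$ and a difference of $4$ are meant.
\end{itemize}
It is also the viewpoint the paper itself adopts later, with sums over cosets of $2m\mathbb{Z}$, in the proof of Lemma~\ref{lem-s2.5-1}.

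\textbf{The step you flagged.} The obstacle you flag is routine. Both $1+t$ and $t^2-1$ are monic with unit constant term. So after multiplying by a power of $t$, division with remainder in $\mathbb{Z}[t]$ shows the kernels are exactly these ideals. Only finitely many cosets are involved, so the resulting $D$ is finitely supported.

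\textbf{A small point in (c).} Corollary~\ref{cor-s1.5-2} is stated for two left irreducible elements, so you should note that $x^2-g_{(c,\zeta,\xi)}$ is irreducible. This follows at once from your invariants. The alternating sum $\zeta(\lambda)-\xi(\lambda)$ is determined by the parity sums, and $\beta$ and $g_{(c,\zeta,\xi)}$ share those. Alternatively, the ``if'' construction in that corollary never uses irreducibility.
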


\begin{proof}
By Corollary~\ref{cor-s1.5-1}, the element 
$x^2-g_{(c,\zeta,\xi)}\in R$ is irreducible
if and only if it is not of the form 
$\sigma(u)u$, for any $u\in\Bbbk$. From the definition of 
$g_{(c,\zeta,\xi)}$ in Formula~\eqref{eq-defgczx}, it follows
directly that $g_{(c,\zeta,\xi)}=\sigma(u)u$ if and only if
$\zeta=\xi$. This proves Claim~\eqref{thm-s1.5-3.1}.

Next, note that $\tilde{\mathbf{G}}$ contains, by definition, 
all elements of the form 
$r_\lambda:=\frac{\mathtt{h}-\lambda}{\mathtt{h}-\lambda-4}$, 
where $\lambda\in\mathbb{C}$. Given a rational function $f\in\Bbbk$,
we can use multiplication and division with such elements to
move all zeros and poles of $f$ into $\Omega_\omega^{(2)}$. In other
words, there exist complex numbers $\lambda_1,\lambda_2,\dots,\lambda_k$
and $\mu_1,\mu_2,\dots,\mu_m$ such that all poles and zeros of 
\begin{displaymath}
\tilde{f}:=\frac{r_{\lambda_1}r_{\lambda_2}\cdots r_{\lambda_k}}{
r_{\mu_1}r_{\mu_2}\cdots r_{\mu_m}}f 
\end{displaymath}
belong to $\Omega_\omega^{(2)}$. This means that $\tilde{f}$ 
equals to $g_{(c,\zeta,\xi)}$, for some 
$(c,\zeta,\xi)\in \Psi$. If $x^2-f$ is irreducible, we have
$x^2-f\sim x^2-\tilde{f}$ by Corollary~\ref{cor-s1.5-2}.
This proves Claim~\eqref{thm-s1.5-3.3}.

Finally, let us prove Claim~\eqref{thm-s1.5-3.2}.
Let $(c,\zeta,\xi)\in \Psi$ be such that $\zeta\neq \xi$.
From Formula~\eqref{eq-defgczx} it follows directly
that $\frac{\sigma(g_{(c,\zeta,\xi)})}{g_{(c,\xi,\zeta)}}\in\tilde{\mathbf{G}}$
and hence $x^2-g_{(c,\zeta,\xi)}\sim x^2-g_{(c,\xi,\zeta)}$
by Corollary~\ref{cor-s1.5-2}.

Let now  $(c',\zeta',\xi')\in \Psi$ be such that 
$x^2-g_{(c,\zeta,\xi)}\sim x^2-g_{(c',\zeta',\xi')}$.
Since all elements in $\tilde{\mathbf{G}}$ are monic, we
immediately obtain $c=c'$ by applying Corollary~\ref{cor-s1.5-2}.
Also, by the same Corollary~\ref{cor-s1.5-2}, we have either 
$\frac{g_{(c,\zeta,\xi)}}{g_{(c',\zeta',\xi')}}\in \tilde{\mathbf{G}}$
or $\frac{\sigma(g_{(c,\zeta,\xi)})}{g_{(c',\zeta',\xi')}}\in \tilde{\mathbf{G}}$.

If $\frac{g_{(c,\zeta,\xi)}}{g_{(c',\zeta',\xi')}}\in \tilde{\mathbf{G}}$, we note
that all zeros and poles of $\frac{g_{(c,\zeta,\xi)}}{g_{(c',\zeta',\xi')}}$
belong to $\Omega_\omega$ due to Formula~\eqref{eq-defgczx}.
At the same time, the only element in $\tilde{\mathbf{G}}$
which has all zeros and poles inside $\Omega_\omega$ is the identity element
(since $\Omega_\omega$ does not contain any pair of complex numbers whose
real parts differ by $2$).
Hence $(c,\zeta,\xi)=(c',\zeta',\xi')$ in this case.

If $\frac{\sigma(g_{(c,\zeta,\xi)})}{g_{(c',\zeta',\xi')}}\in \tilde{\mathbf{G}}$, 
we note that 
$\frac{g_{(c,\xi,\zeta)}}{\sigma(g_{(c,\zeta,\xi)})}\in \tilde{\mathbf{G}}$
by the above and hence
$\frac{g_{(c,\xi,\zeta)}}{g_{(c',\zeta',\xi')}}\in \tilde{\mathbf{G}}$.
This leads to $(c,\xi,\zeta)=(c',\zeta',\xi')$ by the same
argument as in the previous paragraph.
This completes the proof of Claim~\eqref{thm-s1.5-3.2} and hence of the 
whole theorem.
\end{proof}

We note that the algebra $R$ has a natural $\mathbb{Z}$-grading
defined as follows:
\begin{displaymath}
\deg(x)=1,\quad  \deg(x^{-1})=-1\quad  \deg(\mathtt{h})=0.
\end{displaymath}
For any positive integer $k$, this induces 
a $\mathbf{C}_k=\mathbb{Z}/k\mathbb{Z}$-grading. In our case, that is when
$\alpha=0$, the module $M_{0,\beta}$ admits a natural $\mathbf{C}_2$-graded 
lift by putting $v$ in degree $\overline{0}$ and $x(v)$ in degree 
$\overline{1}$. In general, a module $M_{\alpha,\beta}$ 
admits a $\mathbb{Z}_2$-graded  lift if and only if 
$M_{\alpha,\beta}\cong M_{0,\beta'}$, for some $\beta'$.
Therefore Theorem~\ref{thm-s1.5-3} can be interpreted as 
classification of simple $\mathbf{C}_2$-gradable $R$-modules
that have rank $2$ over $\mathbb{C}(\mathtt{h})$.

\subsection{On the irreducibility of $x^2-x-\beta$}\label{s1.7}

In this subsection we discuss irreducibility of 
the element of the form $x^2-x-\beta$. Although we do not
provide an explicit description of all such elements,
we obtain some interesting and non-trivial partial results.
Our aim is to show that this problem of classification of 
irreducible elements of the form $x^2-x-\beta$ is non-trivial
and interesting.

Let $u\in\Bbbk^\times$, written $u=c\frac{f(t)}{g(t)}$, where
$c\in\mathbb{C}^\times$ and $f,g\in\mathbb{C}[t]^\times$ 
are monic and coprime. Then the {\em degree at infinity}
of $u$ is defined as $\mathbf{d}_\infty(u):=\deg(f)-\deg(g)$.
The following general claim, as well as an essential part of the proof,
was suggested by Microsoft Copilot.

\begin{proposition}\label{prop-s1.7-1}
Let $\beta\in\Bbbk^\times$ be such that 
$\mathbf{d}_\infty(\beta)>0$ is odd. Then 
$x^2-x-\beta$ is irreducible.
\end{proposition}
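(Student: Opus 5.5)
By Lemma~\ref{lem-s1.4-1} (with $\alpha=1$), the element $x^2-x-\beta$ is left irreducible if and only if the equation
\begin{displaymath}
y\sigma(y)-y-\beta=0
\end{displaymath}
has no solution $y\in\Bbbk$. The plan is to assume such a $y$ exists and derive a contradiction by comparing degrees at infinity. Note first that $y\neq 0$, since $\beta\neq 0$. Also recall that $\mathbf{d}_\infty$ is additive under multiplication and is invariant under $\sigma$: the substitution $\mathtt{h}\mapsto\mathtt{h}-2$ preserves the degrees of numerator and denominator and their leading coefficients. Hence $\mathbf{d}_\infty(y\sigma(y))=2\mathbf{d}_\infty(y)$, and the leading coefficient at infinity of $y\sigma(y)$ is $c^2$, where $c$ is that of $y$.

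Rewrite the equation as $\beta=y\sigma(y)-y$, and set $d:=\mathbf{d}_\infty(y)$. We use the ultrametric property of $\mathbf{d}_\infty$: for a sum of two non-zero terms, the degree of the sum equals the maximum of the two degrees whenever these are different. We split into cases according to the sign of $d$.

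\emph{Case $d>0$.} Then $2d>d$, so $\mathbf{d}_\infty(\beta)=2d$, which is even. This contradicts the assumption that $\mathbf{d}_\infty(\beta)$ is odd.

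\emph{Case $d<0$.} Then $2d<d$, so $\mathbf{d}_\infty(\beta)=d<0$. This contradicts the assumption $\mathbf{d}_\infty(\beta)>0$.

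\emph{Case $d=0$.} Both terms $y\sigma(y)$ and $y$ have degree $0$ at infinity, so their difference has degree at most $0$ (or is zero). In either event $\mathbf{d}_\infty(\beta)\leq 0$ fails to be positive, again a contradiction. Here we only need the bound $\mathbf{d}_\infty(a+b)\le\max(\mathbf{d}_\infty(a),\mathbf{d}_\infty(b))$, so no cancellation analysis is required.

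Thus no solution $y$ exists, and $x^2-x-\beta$ is left irreducible by Lemma~\ref{lem-s1.4-1}.

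The only step needing care is stating the valuation properties of $\mathbf{d}_\infty$ cleanly. One observes that $-\mathbf{d}_\infty$ is the discrete valuation of $\Bbbk$ at the point at infinity, and that this valuation is preserved by $\sigma$. Once this is in place, every case is immediate. I expect no real obstacle; the proof is simply the degree parity argument above.
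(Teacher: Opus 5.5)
Your proof is correct and follows essentially the same route as the paper: reduce via Lemma~\ref{lem-s1.4-1} with $\alpha=1$, then split into the cases $\mathbf{d}_\infty(y)>0$, $\mathbf{d}_\infty(y)<0$ and $\mathbf{d}_\infty(y)=0$. The only difference is presentation: the paper writes $y=c\,f/g$ and counts the degrees of the numerator and denominator of $y(\sigma(y)-1)$ by hand, while you get the same three conclusions more cleanly from the fact that $-\mathbf{d}_\infty$ is a $\sigma$-invariant valuation.
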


\begin{proof}
By Lemma~\ref{lem-s1.4-1}, we need to show that 
Equation~\eqref{eq-irreq} has no solutions, for
$\alpha=1$ and our $\beta$. We rewrite it as
$y(\sigma(y)-1)=\beta$ and let 
$y=c\frac{f(t)}{g(t)}$, where
$c\in\mathbb{C}^\times$ and $f,g\in\mathbb{C}[t]^\times$ 
are monic and coprime. Inserting this into our equation, we obtain
\begin{displaymath}
c\cdot \frac{f(t)\big(c\cdot 
f(\mathtt{h}-2)-g(\mathtt{h}-2)\big)}{g(\mathtt{h})g(\mathtt{h}-2)}=\beta. 
\end{displaymath}

Assume that $y$ is a solution.
If $\deg(f)>\deg(g)$, then the degree of the numerator 
on the left is
$2\deg(f)$ and this is strictly bigger than the degree $2\deg(g)$ of the
denominator. Any cancellation of a factor in the numerator and denominator 
does not change the
degree at infinity, and hence $\mathbf{d}_\infty(\beta)>0$ must be even.

If $\deg(f)<\deg(g)$, then the degree of the numerator 
on the left is $\deg(f)+\deg(g)$ and this is strictly smaller 
than the degree of the denominator $2\deg(g)$. Any cancellation 
does not change the degree at infinity, and hence 
$\mathbf{d}_\infty(\beta)<0$.

In the case $\deg(f)=\deg(g)$, the degree of the numerator 
on the left is at most  $2\deg(f)=2\deg(g)$ and hence does not exceed
the degree $2\deg(g)$ of the denominator. Any cancellation 
does not change the degree at infinity, and hence 
$\mathbf{d}_\infty(\beta)\leq 0$. The claim follows.
\end{proof}

\subsection{Left irreducible elements of the form  
$x^m-\beta$}\label{s1.8}

In this subsection, we generalize Theorem~\ref{thm-s1.5-3}
using a more sophisticated approach.

Let $m$ be a positive integer.
We fix $\omega\in\mathbb{R}$.
Let $\Psi^{(m)}$ denote the set of all tuples 
$\boldsymbol{\zeta}:=(c,\zeta_0,\zeta_1,\dots,\zeta_{m-1})$, where
$c\in\mathbb{C}^\times$ and all
$\zeta_i\in \mathbf{F}^f(\Omega_\omega,\mathbb{Z})$.
To each such tuple $\boldsymbol{\zeta}$, we associate the rational
function
\begin{displaymath}
g_{\boldsymbol{\zeta}}(\mathtt{h}):=
c\cdot 
\prod_{i=0}^{m-1}
\prod_{\lambda\in \Omega_\omega}(\mathtt{h}-\lambda-2i)^{\zeta_i(\lambda)}.
\end{displaymath}
The cyclic group $\mathbf{C}_m\cong \mathbb{Z}/m\mathbb{Z}$
acts on $\Psi^{(m)}$ as follows: $k\in \mathbf{C}_m$
maps $\zeta_i$ to $\zeta_{i+k}$, where the index is taken modulo $m$.
As usual, we will say that $\boldsymbol{\zeta}\in\Psi^{(m)}$
is {\em regular} provided that the orbit 
$\mathbf{C}_m\cdot \boldsymbol{\zeta}$ is regular, that is,
contains $m$ different elements.
The following statement classifies the similarity classes of the 
irreducible elements of the form $x^m-\beta$, which also can be interpreted as 
classification of simple $\mathbf{C}_m$-gradable $R$-modules
that have rank $m$ over $\mathbb{C}(\mathtt{h})$.

\begin{theorem}\label{thm-s1.8-9}
We have:

\begin{enumerate}[$($a$)$]
\item\label{thm-s1.8-9.1} For
$\boldsymbol{\zeta}\in \Psi^{(m)}$, 
the element $x^m-g_{\boldsymbol{\zeta}}\in R$
is left irreducible if and only if the orbit
$\mathbf{C}_m\cdot \zeta$ is regular.
\item\label{thm-s1.8-9.2} For two regular
$\boldsymbol{\zeta},\boldsymbol{\zeta}'\in \Psi^{(m)}$, the
elements $x^m-g_{\boldsymbol{\zeta}}$ and $x^m-g_{\boldsymbol{\zeta}'}$
are similar
if and only if $\boldsymbol{\zeta}\in \mathbf{C}_m\cdot \boldsymbol{\zeta}'$.
\item\label{thm-s1.8-9.3} If $\beta\in\Bbbk$ is such that
$x^m-\beta$ is left irreducible, then there is 
$\boldsymbol{\zeta}\in \Psi^{(m)}$ such that 
$x^m-\beta$ and $x^m-g_{\boldsymbol{\zeta}}$ are similar.
\end{enumerate}
\end{theorem}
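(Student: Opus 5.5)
The plan is to realize $R/R(x^m-\beta)$ as a module induced from a subalgebra of index $m$, and then use Clifford-type arguments. Let $R_m\subset R$ be the subalgebra $\Bbbk[x^m,x^{-m},\sigma^m]$. It is again a skew Laurent polynomial algebra, now over $\Bbbk$ with the automorphism $\sigma^m\colon \mathtt{h}\mapsto\mathtt{h}-2m$.

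\textbf{Step 1: induction and restriction.} $R$ is a free right $R_m$-module with basis $1,x,\dots,x^{m-1}$. Hence
\begin{displaymath}
R/R(x^m-\beta)\cong R\otimes_{R_m}\Bbbk^{(m)}_\beta ,
\end{displaymath}
where $\Bbbk^{(m)}_\beta$ is the rank one $R_m$-module on which $x^m\cdot 1=\beta$, as in Subsection~\ref{s1.2} with $\sigma^m$ in place of $\sigma$. Its restriction to $R_m$ is $\bigoplus_{i=0}^{m-1}x^i\otimes \Bbbk^{(m)}_\beta$. Since $x^m x^i=\sigma^i(\cdot)$-twisted, i.e. $x^m\cdot x^iv=x^i\cdot\beta v=\sigma^i(\beta)\,x^iv$, each summand $x^i\otimes\Bbbk^{(m)}_\beta$ is isomorphic to $\Bbbk^{(m)}_{\sigma^i(\beta)}$. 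By the analogue of Subsection~\ref{s1.2}, these rank one modules are simple, and $\Bbbk^{(m)}_u\cong\Bbbk^{(m)}_v$ if and only if $u/v\in\mathbf{G}_m:=\{\sigma^m(a)/a : a\in\Bbbk^\times\}$.

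\textbf{Step 2: irreducibility and similarity criteria.} Suppose the $\Bbbk^{(m)}_{\sigma^i(\beta)}$, $0\le i<m$, are pairwise non-isomorphic. Any $R$-submodule $N$ restricts to an $R_m$-submodule of a multiplicity-free semisimple module, so $N$ is a sum of some of the summands. Since $x$ permutes the summands cyclically, $N$ is $0$ or everything, and thus $x^m-\beta$ is left irreducible.

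Conversely, by Frobenius reciprocity,
\begin{displaymath}
\mathrm{End}_R(R/R(x^m-\beta))\cong\bigoplus_i\mathrm{Hom}_{R_m}\bigl(\Bbbk^{(m)}_\beta,\Bbbk^{(m)}_{\sigma^i(\beta)}\bigr).
\end{displaymath}
If $\sigma^i(\beta)/\beta\in\mathbf{G}_m$ for some $0<i<m$, this space has dimension at least $2$. By Lemma~\ref{lem1} the module is then not simple.

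The same reciprocity handles similarity. For irreducible $x^m-\beta$ and $x^m-\beta'$, we have $\mathrm{Hom}_R\neq0$ if and only if $\beta'/\sigma^i(\beta)\in\mathbf{G}_m$ for some $i$.

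\textbf{Step 3: normal forms modulo $\mathbf{G}_m$.} Note that $\mathbf{G}_m$ contains all ratios $\frac{\mathtt{h}-\lambda}{\mathtt{h}-\lambda-2m}$. Using these, every $\beta\in\Bbbk^\times$ can be moved so that all its zeros and poles have real part in $[\omega,\omega+2m)=\bigcup_{i<m}(\Omega_\omega+2i)$. The result is some $g_{\boldsymbol{\zeta}}$, which gives (c) once combined with Step 2.

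Uniqueness goes as follows. Elements of $\mathbf{G}_m$ have leading coefficient $1$. A nontrivial element of $\mathbf{G}_m$ has some zero or pole pair differing by exactly $2m$ in its extremal factors, so it cannot have all zeros and poles in a half-open strip of width $2m$. Hence $g_{\boldsymbol{\zeta}}/g_{\boldsymbol{\zeta}'}\in\mathbf{G}_m$ forces $\boldsymbol{\zeta}=\boldsymbol{\zeta}'$.

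Finally, $\sigma(g_{\boldsymbol{\zeta}})$ replaces each factor $(\mathtt{h}-\lambda-2i)$ by $(\mathtt{h}-\lambda-2(i+1))$. The factor with $i=m-1$ becomes $(\mathtt{h}-\lambda-2m)\equiv(\mathtt{h}-\lambda)$ modulo $\mathbf{G}_m$. So $\sigma(g_{\boldsymbol{\zeta}})\equiv g_{k\cdot\boldsymbol{\zeta}}$ for the generator $k$ of $\mathbf{C}_m$, with the same $c$. Feeding this into Step 2 gives (a), namely irreducibility if and only if the orbit is regular, and (b), namely similarity if and only if the tuples lie in the same orbit.

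\textbf{Main obstacle.} I expect the main care to be needed in two places. The first is the normal-form uniqueness in Step 3, a zeros/poles combinatorial argument for $\mathbf{G}_m$ that generalizes the one in Theorem~\ref{thm-s1.5-3}. The second is the justification, in Step 2, that reducibility follows from a repeated summand; this is where Lemma~\ref{lem1} is essential.
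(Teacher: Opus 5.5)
Your proof is correct. Its skeleton matches the paper's:
\begin{enumerate}
\item restrict to the subalgebra generated by $\Bbbk$ and $x^{\pm m}$;
\item decompose $R/R(x^m-\beta)$ into the rank-one pieces with parameters $\sigma^i(\beta)$;
\item normalize modulo $\{\sigma^m(a)/a\}$ into the strip $[\omega,\omega+2m)$;
\item identify $\sigma$ with the cyclic shift of $\boldsymbol{\zeta}$.
\end{enumerate}
Your multiplicity-free argument for the ``if'' part of (a) is the paper's minimal-support argument, phrased for semisimple modules. Your reciprocity argument for (b) is a tidier version of the paper's comparison of the two restrictions.

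The genuine divergence is the ``only if'' part of (a).
\begin{itemize}
\item The paper proves reducibility constructively. If $r$ generates the stabilizer of $\boldsymbol{\zeta}$, it writes $g_{\boldsymbol{\zeta}}=\tilde g\,\sigma^r(\tilde g)\cdots\sigma^{(k-1)r}(\tilde g)$ and exhibits the factorization $x^m-g_{\boldsymbol{\zeta}}=A(x^r-\tilde g)$.
\item You instead use $\mathrm{End}_R(R\otimes_{R_m}V)\cong\bigoplus_i\mathrm{Hom}_{R_m}(V,V_i)$ to produce a second endomorphism, linearly independent from the identity, and then invoke Lemma~\ref{lem1}.
\end{itemize}
Your route gives an intrinsic criterion for arbitrary $\beta$: $x^m-\beta$ is left irreducible if and only if $\sigma^i(\beta)/\beta\notin\mathbf{G}_m$ for all $0<i<m$. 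It also handles (a) and (b) by a single mechanism. The cost is that it genuinely depends on Lemma~\ref{lem1}. Without that lemma, a two-dimensional endomorphism ring is no contradiction, since a priori $\mathrm{End}_R$ of a simple module is only some division algebra over $\mathbb{C}$. The paper's route is elementary and produces an explicit right factor, which is exactly what Corollary~\ref{cor-lirr} later builds on.

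One small correction in Step~3. For a nontrivial $\sigma^m(a)/a$, the extremal zero and pole on a coset $\nu+2m\mathbb{Z}$ differ by a positive multiple of $2m$, not necessarily by exactly $2m$. Your conclusion still holds. The cleanest formulation is this: the exponents of $\sigma^m(a)/a$ along each coset $\nu+2m\mathbb{Z}$ telescope to total zero, while the strip meets each coset in a single point.
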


\begin{proof}
Denote by $R^{(m)}$ the subalgebra of $R$ generated by
$\Bbbk$ and $x^{\pm m}$. Clearly, we have $R^{(m)}=\Bbbk[y,y^{-1},\sigma^m]$,
where $y=x^m$. For any $\beta\in \Bbbk$, consider 
the $R$-module $M:=R/R(x^m-\beta)$ and its basis
$\{v_i\,:\, i\in\mathbf{C}_m\}$ given by $v_i=x^i+ R(x^m-\beta)$.
In this basis, the action
of $x$ and $x^{-1}$ is given by
\begin{equation}\label{eq-xandxinv2}
\left(\begin{array}{cccccc}
0&0&0&\dots&0&\beta\\
1&0&0&\dots&0&0\\
0&1&0&\dots&0&0\\
\vdots&\vdots&\vdots&\ddots&\vdots&\vdots\\
0&0&0&\dots&0&0\\
0&0&0&\dots&1&0\\
\end{array}\right)\quad\text{ and }\quad 
\left(\begin{array}{cccccc}
0&1&0&\dots&0&0\\
0&0&1&\dots&0&0\\
0&0&0&\dots&0&0\\
\vdots&\vdots&\vdots&\ddots&\vdots&\vdots\\
0&0&0&\dots&0&1\\
\resizebox{9mm}{!}{$\sigma^{-1}(\beta^{-1})$}&0&0&\dots&0&0\\
\end{array}\right),
\end{equation}
respectively. We stress the attention to the fact  that both $x$
and $x^{-1}$ are only  $\mathbb{C}(\mathtt{h})$-linear up to $\sigma$
(resp. $\sigma^{-1}$), which is the reason why it is
$\sigma^{-1}(\beta^{-1})$ and not just $\beta^{-1}$
that appears in the matrix describing the action of  $x^{-1}$.
Denote by $N_i$ the subspace $\Bbbk v_i$ of $M$.
Then 
\begin{equation}\label{eq-simrm}
M= N_0\oplus N_1\oplus\dots\oplus N_{m-1}, 
\end{equation}
by construction.

Now let us analyze the restriction of $M$ to $R^{(m)}$. 
Using \eqref{eq-xandxinv2}, we have
\begin{displaymath}
y\cdot  v_i = x^m\cdot  v_i = \sigma^{i}(\beta) v_i,\quad
\text{ for }i\in \mathbf{C}_m.
\end{displaymath}
Consequently, each $N_i$ is an $R^{(m)}$-submodule.
By construction, $N_i$ has dimension $1$ over $\Bbbk$ 
and hence is simple, as an  $R^{(m)}$-module. Therefore, the decomposition
\eqref{eq-simrm} is a decomposition of $M$ into a direct sum
of simple $R^{(m)}$-submodules.

Now we prove Claim~\eqref{thm-s1.8-9.1}. Let 
$\beta=g_{\boldsymbol{\zeta}}$, for some regular 
$\boldsymbol{\zeta}$. From Subsection~\ref{s1.2}
(more precisely, from  \cite[Section~2]{GKM}), it follows that the 
$R^{(m)}$-modules $N_0,N_1,\dots,N_m$ are pair-wise 
non-isomorphic. Since they all are simple, it follows that,
for any $i\neq j$ and any $0\neq a_i\in N_i$ and 
$0\neq a_j\in N_j$, the corresponding annihilators 
$\mathrm{Ann}_{R^{(m)}}(a_i)$ and $\mathrm{Ann}_{R^{(m)}}(a_j)$
are different maximal left ideals in $R^{(m)}$. In particular,
neither can we have $\mathrm{Ann}_{R^{(m)}}(a_i)\subset 
\mathrm{Ann}_{R^{(m)}}(a_j)$ nor vice versa.

We want to prove that $M$ is simple. 
Let $w\in M$ be a non-zero element. Consider
$Rw$. Each element in $Rw$ can be written in our basis
$\{v_i\}$. Let $w'\in Rw$ be a non-zero element for which 
the number of non-zero coefficients with respect to this
basis is minimal possible. Assume that the number of 
these non-zero coefficients is at least two. Take two
such coefficients, call one of them $a$ and assume it is the
coefficient at $v_i$, call the other one $b$ and assume
it is the coefficient at $v_j$, where $i\neq j$. 
Now, by the previous paragraph, there is 
$u\in \mathrm{Ann}_{R^{(m)}}(av_i)\setminus 
\mathrm{Ann}_{R^{(m)}}(bv_j)$. Applying $u$ to $w'$, we certainly
kill the coefficient at $v_i$ and we do not kill the coefficient
at $v_j$. Also, no new non-zero coefficients can be created.
Therefore $u\cdot w'\neq 0$ has strictly fewer non-zero coefficients
compared to $w'$, a contradiction. This implies that 
$w'=bv_j$, for some $j\in\mathbf{C}_m$ and some $b\in\Bbbk^\times$.
Applying $b^{-1}$, we get $v_j\in Rw$. Applying $x$ repeatedly
(and, when necessary, $\beta^{-1}$), we get that 
$Rw$ contains all basis elements in $\{v_i\}$ and hence coincides with $M$.
This proves the ``if'' part of Claim~\eqref{thm-s1.8-9.1}.

To prove the ``only if'' part, let 
$\beta=g_{\boldsymbol{\zeta}}$, for some 
$\boldsymbol{\zeta}$ which has a non-trivial stabilizer $G$ in
$\mathbf{C}_m$. As any subgroup of a cyclic group is cyclic,
$G$ is generated by some $1<r<m$ which divides $m$. Set
\begin{displaymath}
\tilde{g}:=
\tilde{c}\cdot 
\prod_{i=0}^{r-1}
\prod_{\lambda\in \Omega_\omega}(\mathtt{h}-\lambda-2i)^{\zeta_i(\lambda)},
\end{displaymath}
where $(\tilde{c})^{k}=c$, for $k=\frac{m}{r}$. 
Note that $G\cong\mathbf{C}_k$. Then, directly from 
the definitions, we have
\begin{displaymath}
g_{\boldsymbol{\zeta}}=\tilde{g}
\sigma^{r}(\tilde{g})\sigma^{2r}(\tilde{g})\dots
\sigma^{(k-1)r}(\tilde{g}).
\end{displaymath}
This implies that 
\begin{equation}\label{eq-decomp1}
x^m-g_{\boldsymbol{\zeta}}=A\big(x^r- \tilde{g}\big), 
\end{equation}
where $A$ equals
\begin{multline*}
x^{(k-1)r}+
\sigma^{(k-1)r}(\tilde{g})x^{(k-2)r}
+\big(\sigma^{(k-1)r}(\tilde{g})\sigma^{(k-2)r}(\tilde{g})\big)x^{(k-3)r}
+\dots \\ \dots +
\big(\sigma^{(k-1)r}(\tilde{g})\dots \sigma^{2r}(\tilde{g})\big)x^{r}
+
\big(\sigma^{(k-1)r}(\tilde{g})\dots \sigma^{r}(\tilde{g})\big).
\end{multline*}
Consequently, $x^m-g_{\boldsymbol{\zeta}}$ is not left irreducible,
completing the proof of Claim~\eqref{thm-s1.8-9.1}.

Claim~\eqref{thm-s1.8-9.3} is proved similarly to the 
proof of Theorem~\ref{thm-s1.5-3}\eqref{thm-s1.5-3.3}.
For $\alpha\in\Bbbk^\times$, consider the basis 
$\{w_i:=\sigma^i(\alpha)v_i\,:\,i\in\mathbb{C}_m\}$ of $M$ 
and note that $x\cdot w_i =w_{i+1}$ if $i\neq m-1$
and $x\cdot w_{m-1}=\frac{\sigma^m(\alpha)}{\alpha}\beta w_0$.
This implies that $R/R(x^m-\beta )$
and $R/R(x^m-\beta\frac{\sigma^m(\alpha)}{\alpha} )$ are isomorphic.
Now, to prove Claim~\eqref{thm-s1.8-9.3}, we do induction on 
the sum of distances between the real parts of  
all poles and zeros of $\beta$ (counted with their multiplicities) 
to the half-interval 
$[\omega,\omega+2m)$. If the sum is zero, 
we have  $\beta=g_{\boldsymbol{\zeta}}$,
for some $\boldsymbol{\zeta}\in \Psi^{(m)}$, 
so we have nothing to prove.

Now assume that $\beta$ has a zero or pole $\lambda$
whose real part does not belong to $[\omega,\omega+2m)$.
Then the real part of either $\lambda+2m$ or 
$\lambda-2m$ is closer to $[\omega,\omega+2m)$. Taking
either $\alpha=\mathtt{h}-\lambda$ or $\alpha=(\mathtt{h}-\lambda)^{-1}$
or $\alpha=\mathtt{h}-\lambda-2m$ or $\alpha=(\mathtt{h}-\lambda-2m)^{-1}$
(depending on whether we have a zero or a pole and
whether $\lambda-2m$ or $\lambda+2m$ is closer to $[\omega,\omega+2m)$)
and applying the above base change replaces 
$\beta$ with $\beta\frac{\sigma^m(\alpha)}{\alpha}$
and lowers our sum by $2m$. Now 
Claim~\eqref{thm-s1.8-9.3} follows by induction.

For Claim~\eqref{thm-s1.8-9.2}, we observe that existence of
an isomorphism between $R/R(x^m-g_{\boldsymbol{\zeta}})$
and $R/R(x^m-g_{\boldsymbol{\zeta'}})$ implies an isomorphism
between their restrictions to $R^{(m)}$. Such restrictions
are explicitly described in the first paragraph of this proof.
When we restrict $R/R(x^m-g_{\boldsymbol{\zeta}})$ to 
$R^{(m)}$, we get a direct sum of the $N_i$'s, where
$N_i$ is the one-dimensional (over $\Bbbk$) simple
$R^{(m)}$-module corresponding to $\sigma^i(g_{\boldsymbol{\zeta}})$.

Note that the real parts of the zeros and poles of 
$\sigma^i(g_{\boldsymbol{\zeta}})$ get shifted from 
$[\omega,\omega+2m)$, for the element $g_{\boldsymbol{\zeta}}$, to 
$[\omega+2i,\omega+2m+2i)$, for the element $\sigma^i(g_{\boldsymbol{\zeta}})$.
We can now use the above trick of replacing
$\beta$ by $\beta\frac{\sigma^m(\alpha)}{\alpha}$
and move the zeros and poles which landed outside of 
our $[\omega,\omega+2m)$-strip back. This does not affect
our module, up to isomorphism, as explained above. The 
effect on $g_{\boldsymbol{\zeta}}$ corresponds to the cyclic 
permutation of the components of $\zeta$ in the sense that,
for any $i\in\mathbf{C}_m$,
we have an isomorphism
\begin{displaymath}
R^{(m)}/ R^{(m)}(y-\sigma^i(g_{\boldsymbol{\zeta}}))
\cong R^{(m)}/ R^{(m)}(y-g_{(m-i)\cdot \boldsymbol{\zeta}})).
\end{displaymath}
In fact, the same trick also works at the level of $R$-modules,
which implies that 
\begin{displaymath}
R/ R(x^m-\sigma^i(g_{\boldsymbol{\zeta}}))
\cong R/ R(x^m-g_{(m-i)\cdot \boldsymbol{\zeta}})).
\end{displaymath}
This implies the ``if'' part of Claim~\eqref{thm-s1.8-9.2}.

To prove the ``only if'' part, we note that, by
Subsection~\ref{s1.2}, if we have $\beta$ and $\beta'$
such that all real parts of all poles and zeros of these
two elements belong to $[\omega,\omega+2m)$, then 
the $R^{(m)}$-modules
$R^{(m)}/R^{(m)}(y-\beta)$
and $R^{(m)}/R^{(m)}(y-\beta')$ are isomorphic if and only
if $\beta=\beta'$. Therefore, our previous discussion 
implies that  an isomorphism between 
$R/R(x^m-g_{\boldsymbol{\zeta}})$ and $R/R(x^m-g_{\boldsymbol{\zeta'}})$
means that $g_{\boldsymbol{\zeta'}}$ can be obtained from 
$g_{\boldsymbol{\zeta}}$ by some $\sigma^i$-shift followed by
the cyclic reshuffling of the zeros and poles, as described above. 
This means exactly
that $\boldsymbol{\zeta}=i\cdot \boldsymbol{\zeta'}$,
that is,
$\boldsymbol{\zeta}\in \mathbf{C}_m\cdot \boldsymbol{\zeta}'$.
This proves Claim~\eqref{thm-s1.8-9.2} and completes the proof.
\end{proof}

\begin{example}\label{example13}
{\em 
For any positive integer $m$, the element $x^m-\mathtt{h}$
is left irreducible. 
}
\end{example}

\begin{corollary}\label{cor-lirr}
For $m\in\mathbb{Z}_{>0}$ and $\beta\in\Bbbk^\times$, the element
$x^m-\beta$ is left irreducible if and only if, for any prime
divisor $p\vert m$, we cannot write 
$\beta$ as $\alpha\sigma^{\frac{m}{p}}(\alpha)\sigma^{2\frac{m}{p}}(\alpha)
\dots \sigma^{(p-1)\frac{m}{p}}(\alpha)$,
where $\alpha\in\Bbbk^\times$.
\end{corollary}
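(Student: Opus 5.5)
We reduce to Theorem~\ref{thm-s1.8-9}. By the base change used in the proof of Claim~\eqref{thm-s1.8-9.3}, $R/R(x^m-\beta)\cong R/R(x^m-\beta\frac{\sigma^m(\gamma)}{\gamma})$ for every $\gamma\in\Bbbk^\times$. Moreover, we can write $\beta\frac{\sigma^m(\gamma)}{\gamma}=g_{\boldsymbol{\zeta}}$ for a suitable $\gamma$ and some $\boldsymbol{\zeta}\in\Psi^{(m)}$. This step does not use irreducibility; it only uses the induction on distances of zeros and poles to the strip $[\omega,\omega+2m)$. Since left irreducibility of $x^m-\beta$ is equivalent to simplicity of $R/R(x^m-\beta)$, Claim~\eqref{thm-s1.8-9.1} tells us that $x^m-\beta$ is left irreducible if and only if $\boldsymbol{\zeta}$ has trivial stabilizer in $\mathbf{C}_m$. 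It therefore remains to show two things: the stabilizer of $\boldsymbol{\zeta}$ is nontrivial if and only if $\beta$ admits a factorization as in the statement for some prime $p\vert m$, and this second condition is invariant under the replacement $\beta\mapsto\beta\frac{\sigma^m(\gamma)}{\gamma}$.

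\emph{Invariance.} For $p\vert m$ write $r=m/p$ and $N_r(\alpha):=\alpha\sigma^r(\alpha)\cdots\sigma^{(p-1)r}(\alpha)$. Then $N_r(\sigma^r(\gamma)/\gamma)=\sigma^{m}(\gamma)/\gamma$ telescopes. Also, $N_r$ is multiplicative. Hence $\beta=N_r(\alpha)$ if and only if $\beta\frac{\sigma^m(\gamma)}{\gamma}=N_r(\alpha\sigma^r(\gamma)/\gamma)$. So we may assume $\beta=g_{\boldsymbol{\zeta}}$.

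\emph{Nontrivial stabilizer gives a factorization.} If the stabilizer $G$ is nontrivial, it contains an element of prime order $p$, namely the class of $r=m/p$, so $\boldsymbol{\zeta}$ is $r$-periodic. The computation in the ``only if'' part of Claim~\eqref{thm-s1.8-9.1} then gives $g_{\boldsymbol{\zeta}}=N_r(\tilde g)$ with $\tilde g$ built from $\zeta_0,\dots,\zeta_{r-1}$ and $\tilde c^p=c$.

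\emph{A factorization gives nontrivial stabilizer.} Suppose $g_{\boldsymbol{\zeta}}=N_r(\alpha)$. First, the factorization \eqref{eq-decomp1} holds for arbitrary $\alpha$ in place of $\tilde g$: $x^m-N_r(\alpha)=A(x^r-\alpha)$ with $A$ given by the same formula. This is a direct telescoping check. Since $0<r<m$, the ideal $R(x^r-\alpha)$ lies strictly between $R(x^m-\beta)$ and $R$, as the $\Bbbk$-dimensions of the quotients are $r$ and $m$. Hence $x^m-\beta$ is not left irreducible, so by Claim~\eqref{thm-s1.8-9.1} $\boldsymbol{\zeta}$ is not regular. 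This direction thus avoids any combinatorics on zeros and poles, and it does not even need the reduction to $g_{\boldsymbol{\zeta}}$.

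In fact the cleanest write-up skips the invariance step. ``If $\beta=N_{m/p}(\alpha)$ then reducible'' is exactly the factorization above. For ``reducible implies such a factorization'', use the reduction: reducibility of $x^m-\beta$ is the same as reducibility of $x^m-g_{\boldsymbol{\zeta}}$, which by Claim~\eqref{thm-s1.8-9.1} means $\boldsymbol{\zeta}$ is non-regular. That gives $g_{\boldsymbol{\zeta}}=N_r(\tilde g)$ for $r=m/p$ with $p$ prime, and transporting back via $\beta=g_{\boldsymbol{\zeta}}\frac{\gamma}{\sigma^m(\gamma)}=N_r(\tilde g\,\gamma/\sigma^r(\gamma))$ finishes the argument.

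The main point requiring care is the passage from a nontrivial stabilizer to one of \emph{prime} order $p$, with $r=m/p$ and $\zeta_{i+r}=\zeta_i$. This is elementary because $\mathbf{C}_m$ is cyclic: its stabilizer subgroup is $\langle d\rangle$ for some proper divisor $d$ of $m$, and it contains $\langle m/p\rangle$ for any prime $p$ dividing $m/d$. One must also make sure that $\tilde g$ uses a $p$-th root of $c$, which exists in $\mathbb{C}$.
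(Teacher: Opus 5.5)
Your proof is correct and is essentially the paper's argument. One direction uses the factorization \eqref{eq-decomp1}. The other reduces to $g_{\boldsymbol{\zeta}}$ via the base change $\beta\mapsto\beta\frac{\sigma^m(\gamma)}{\gamma}$, applies Theorem~\ref{thm-s1.8-9}\eqref{thm-s1.8-9.1}, and rewrites both $g_{\boldsymbol{\zeta}}$ and $\gamma/\sigma^m(\gamma)$ as $N_{m/p}$-norms. Your choice of an order-$p$ stabilizer element and the explicit $\gamma/\sigma^{m/p}(\gamma)$ simply make the paper's ``combine the factors'' steps explicit, and your remark that the base change does not need irreducibility tidies up the paper's appeal to Claim~\eqref{thm-s1.8-9.3} outside its stated hypothesis.
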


\begin{proof}
If $\beta=
\alpha\sigma^{\frac{m}{p}}(\alpha)\sigma^{2\frac{m}{p}}(\alpha)
\dots \sigma^{(p-1)\frac{m}{p}}(\alpha)$, the element
$x^m-\beta$  admits a decomposition as
in \eqref{eq-decomp1} and hence it is not left irreducible.

On the other hand, if $x^m-\beta$ is left reducible, 
then, to start with, $m>1$. Then,  by 
Theorem~\ref{thm-s1.8-9}\eqref{thm-s1.8-9.3}, 
there is $\gamma\in\Bbbk^\times$
such that $\frac{\sigma^{m}(\gamma)}{\gamma}\beta=
g_{\boldsymbol{\zeta}}$, for some 
$\boldsymbol{\zeta}\in \Psi^{(m)}$ which is not regular. 
This means that the stabilizer of $\boldsymbol{\zeta}$
in $\mathbf{C}_m$ is non-trivial, say, 
equals $r\mathbf{C}_m$, for some proper divisor $r\vert m$,
that is $1\leq r<m$.

Consider the element
\begin{displaymath}
\xi=c'\cdot 
\prod_{i=0}^{r-1}
\prod_{\lambda\in \Omega_\omega}(\mathtt{h}-\lambda-2i)^{\zeta_i(\lambda)},
\end{displaymath}
where $c'\in\mathbb{C}$ is such that $(c')^{\frac{m}{r}}=c$.
From the construction of $g_{\boldsymbol{\zeta}}$, we obtain that 
\begin{displaymath}
g_{\boldsymbol{\zeta}} =\xi\sigma^{r}(\xi)\sigma^{2r}(\xi)\dots
\sigma^{(\frac{m}{r}-1)r}(\xi).
\end{displaymath}
If we take $p$ to be any prime divisor of $\frac{m}{r}$, we can
combine the factors in the letter product and write this product in the form
\begin{displaymath}
g_{\boldsymbol{\zeta}} =\xi'\sigma^{\frac{m}{p}}(\xi')
\sigma^{2\frac{m}{p}}(\xi')\dots
\sigma^{(p-1)\frac{m}{p}}(\xi'),
\end{displaymath}
for an appropriate $\xi'$.

Also note that 
\begin{displaymath}
\frac{\gamma}{\sigma^m(\gamma)}=
\frac{\gamma{\color{violet}\sigma(\gamma)\cdots \sigma^{m-1}(\gamma)}}{
{\color{violet}\sigma(\gamma)\cdots \sigma^{m-1}(\gamma)}\sigma^m(\gamma)}
\end{displaymath}
and here the factors on the right hand side can also be combined so that
the whole expression is written in the form 
\begin{displaymath}
\xi''\sigma^{\frac{m}{p}}(\xi'')
\sigma^{2\frac{m}{p}}(\xi'')\dots
\sigma^{(p-1)\frac{m}{p}}(\xi''),
\end{displaymath}
for an appropriate $\xi''$. Combining the two expressions, we 
obtain that $\beta$ has the necessary form as claimed.
This completes the proof.
\end{proof}

\begin{example}\label{ex-irr}
{\em
From Corollary~\ref{cor-lirr}, we have that 
an element $x^4-\beta$ is left irreducible if and only if 
$\beta$ is not of the form $\alpha\sigma^2(\alpha)$, for any
$\alpha\in \Bbbk$. This seems to be a significant improvement of
\cite[Corollary~2]{BP}.
}
\end{example}

\section{Applications to $\mathfrak{sl}_2$-modules}\label{s2}

\subsection{$\mathfrak{sl}_2$-setup}\label{s2.1}

For a Lie algebra  $\mathfrak{a}$, we will  denote the universal enveloping
algebra of $\mathfrak{a}$ by $U(\mathfrak{a})$. 

We refer the reader to \cite{Ma10} for more details on 
the Lie algebra $\mathfrak{sl}_2$ and its representations.
Let $\mathfrak{g}$ be the Lie algebra $\mathfrak{sl}_2$ over $\mathbb{C}$.
We fix the standard basis
\begin{displaymath}
\mathtt{e}:=\left(\begin{array}{cc}0&1\\0&0\end{array}\right),\quad
\mathtt{f}:=\left(\begin{array}{cc}0&0\\1&0\end{array}\right),\quad
\mathtt{h}:=\left(\begin{array}{cc}1&0\\0&-1\end{array}\right)
\end{displaymath}
in $\mathfrak{g}$. Then the Lie bracket on $\mathfrak{g}$ is 
given by
\begin{displaymath}
[\mathtt{h},\mathtt{e}]=2\mathtt{e},\quad 
[\mathtt{h},\mathtt{f}]=-2\mathtt{f},\quad 
[\mathtt{e},\mathtt{f}]=\mathtt{h}. 
\end{displaymath}
In particular, we have the following relations:
\begin{equation}\label{eq1}
\mathtt{h}\mathtt{e}=\mathtt{e}(\mathtt{h}+2)\quad
\text{ and }
\mathtt{h}\mathtt{f}=\mathtt{f}(\mathtt{h}-2) 
\end{equation}
in $U(\mathfrak{g})$. We consider the Casimir element
\begin{equation}\label{eq1a}
\mathtt{c}:=(\mathtt{h}+1)^2+4 \mathtt{f}\mathtt{e}=
(\mathtt{h}-1)^2+4 \mathtt{e}\mathtt{f}\in U(\mathfrak{g})
\end{equation}
which generates the center of $U(\mathfrak{g})$.
Given $\vartheta\in\mathbb{C}$, we will denote by $U_\vartheta$
the quotient algebra $U(\mathfrak{g})/U(\mathfrak{g})(\mathtt{c}-\vartheta)$.

\subsection{$U_\vartheta$ and skew Laurent polynomial algebra}\label{s2.3}

Connecting with Section~\ref{s1}, we consider the field 
$\Bbbk:=\mathbb{C}(\mathtt{h})$ as well as its
automorphism $\sigma$ defined as the identity on $\mathbb{C}$ and,
additionally, via $\sigma(\mathtt{h})=\mathtt{h}-2$. 
This gives us the corresponding 
skew Laurent polynomial algebra $R:=\Bbbk[x,x^{-1},\sigma]$
which we studied in Section~\ref{s1}.

By \cite[Theorem~6.2.12]{Ma10}, if we fix $\vartheta\in\mathbb{C}$, 
then we have an injective algebra homomorphism $\Phi$ from
$U_\vartheta$ to $R$ which is uniquely determined by
\begin{displaymath}
\Phi(\mathtt{e})=x\quad\text{ and }\quad
\Phi(\mathtt{f})=
\frac{\vartheta-(\mathtt{h}+1)^2}{4}x^{-1}\quad\text{ and }\quad
\Phi(\mathtt{h})=\mathtt{h}.
\end{displaymath}
We have the restriction  functor
\begin{displaymath}
\mathrm{Res}^{R}_{U_\vartheta}:R\text{-Mod}\to  U_\vartheta\text{-Mod}
\end{displaymath}
defined by pulling back via $\Phi$. For simplicity,
we will often identify $U_\vartheta$ with its image under 
the homomorphism $\Phi$.

By \cite{Bl79,Bl81}, if $M$ is a simple $R$-module, then the
module $\mathrm{Res}^{R}_{U_\vartheta}(M)$ has simple socle.
Moreover, the map $M\mapsto \mathrm{Soc}(\mathrm{Res}^{R}_{U_\vartheta}(M))$
is a bijection between the set of the isomorphism classes of simple
$R$-modules and the set of the isomorphism classes of simple
$U_\vartheta$-modules that are torsion free over $U(\mathtt{h})$.
In \cite{GKM}, this observation, combined with the description in
Subsection~\ref{s1.2}, was used to provide an explicit 
classification of simple $\mathfrak{g}$-modules that are 
torsion free over $U(\mathtt{h})$ of rank $1$.

\subsection{Explicit description for $x^m-\beta$}\label{s2.5}

Let $m$ be a positive integer and $\beta\in \Bbbk$.
Consider the $R$-module $N_\beta:=R/R(x^m-\beta)$. 
In this subsection we provide an explicit description of the 
$\mathfrak{g}$-modules which appear as socles of the modules
$\mathrm{Res}^{R}_{U_\vartheta}(N_{\beta})$ in the case 
when $N_{\beta}$ is a simple $R$-module. Here $\vartheta\in\mathbb{C}$
is fixed. To simplify our arguments, we make the following choice
which depends on $\vartheta$:
we choose $\omega$ such that the real parts of all roots of
the polynomial $\frac{\vartheta-(\mathtt{h}+1)^2}{4}$ are strictly
smaller than $\omega$. Let $r_1,r_2\in\mathbb{C}$ be such that
$\vartheta-(\mathtt{h}+1)^2=-(\mathtt{h}-r_1)(\mathtt{h}-r_2)$.
Note that $r_1=r_2$ is possible and, if this is indeed
the case, we have $r_1=r_2=-1$. 

Now, fix $c\in\mathbb{C}^\times$ and 
$\zeta_0,\zeta_1,\dots,\zeta_{m-1}\in \mathbf{F}^{f}(\Omega_\omega,\mathbb{Z})$
such that the corresponding $\boldsymbol{\zeta}=(c,\zeta_0,\dots,\zeta_{m-1})$ is regular.  
Then, for $\beta=g_{\boldsymbol{\zeta}}$, the $R$-module $N_{\beta}$ 
is simple, see Theorem~\ref{thm-s1.8-9}. 
Denote by $L_\beta$ the 
simple socle of $\mathrm{Res}^{R}_{U_\vartheta}(N_{\beta})$.
Our aim is to determine a basis of $L_\beta$ explicitly.
Note that $L_\beta\neq N_{\beta}$ because $L_\beta$
has countable dimension while $N_{\beta}$ has uncountable
dimension.

Let $\mathtt{m}:\Omega_\omega^{(m)}\to\mathbb{Z}$ be the function
such that 
\begin{displaymath}
\beta=c\cdot \prod_{\lambda\in \Omega_\omega^{(m)}}
(\mathtt{h}-\lambda)^{\mathtt{m}(\lambda)}.
\end{displaymath}

Write $\beta=c\cdot \frac{a(\mathtt{h})}{b(\mathtt{h})}$, where 
$a,b\in \mathbb{C}[\mathtt{h}]$ are monic and coprime.
Note that $a,b\neq 0$. Set $v_i:=x^i+R(x^m-\beta)$,
for $i=0,1,\dots,m-1$. Then $v_0,v_1,\dots,v_{m-1}$
is a  basis of  $N_{\beta}$. Note that this basis is homogeneous
with respect to the $\mathbf{C}_m$-grading
(the degree of $\mathtt{e}$ is $1$, the degree of $\mathtt{f}$ is $-1$
and the degree of $\mathtt{h}$ is $0$).
In what follows,
the notation $\sum_i$ will mean that we sum over all
indices of this basis. 
To simplify our notation, for $r\in\Bbbk$ and $i\in\mathbb{Z}_{>0}$, 
we denote by $[r]^{(i)}$ the element $r\sigma(r)\sigma^2{r}\cdots\sigma^{i-1}(r)$
and by $[r]_{(i)}$ the element $r\sigma^{-1}(r)\sigma^{-2}{r}\cdots\sigma^{1-i}(r)$.

\begin{lemma}\label{lem-s2.5-1}
For each $j\in\{0,1,\dots,m-1\}$, there is a non-zero 
$f_j\in\mathbb{C}[\mathtt{h}]$ such that 
$L_\beta$ contains $f_jv_j$.
\end{lemma}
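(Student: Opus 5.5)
Take any non-zero $w\in L_\beta$ and write it in the basis as $w=\sum_i p_iv_i$ with $p_i\in\Bbbk$. The plan is to act on $w$ by elements of $U_\vartheta$, namely by polynomials in $\mathtt{h}$ and by powers of $\mathtt{e}$ and $\mathtt{f}$. This should produce, inside $L_\beta$, a non-zero element of the form $f_jv_j$ with $f_j\in\mathbb{C}[\mathtt{h}]$, first for a single index $j$ and then for all $j$.

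\emph{Step 1: clear denominators.} Under $\Phi$ we have $\Phi(\mathtt{h})=\mathtt{h}$. Hence $U(\mathtt{h})=\mathbb{C}[\mathtt{h}]\subset U_\vartheta$ acts on $N_\beta$ by left multiplication on the coefficients, because $\mathtt{h}\cdot(pv_i)=\mathtt{h}p\,v_i$. Let $D\in\mathbb{C}[\mathtt{h}]$ be a common denominator of the $p_i$. Then $Dw\in L_\beta$ has all coefficients in $\mathbb{C}[\mathtt{h}]$, and we replace $w$ by $Dw$.

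\emph{Step 2: isolate one homogeneous component.} The basis $v_0,\dots,v_{m-1}$ is homogeneous for the $\mathbf{C}_m$-grading, and $\mathtt{h}$ has degree $0$. The components $p_iv_i$ lie in different $R^{(m)}$-eigen-lines, but $\mathtt{h}$ alone does not separate them, since it acts on every $N_i$ in the same way. So we need an element of $U_\vartheta$ of degree $0$ that distinguishes the components. The natural choices are $\mathtt{e}^m=x^m$ and $\mathtt{f}^m$. By the computation in the proof of Theorem~\ref{thm-s1.8-9} we have
\begin{displaymath}
\mathtt{e}^m\cdot(pv_i)=\sigma^m(p)\sigma^i(\beta)v_i.
\end{displaymath}
Similarly, $\mathtt{f}^m\cdot(pv_i)=\sigma^{-m}(p)\,\varphi_i\,v_i$ for an explicit $\varphi_i\in\Bbbk$, which is a product of $\sigma$-shifts of $\frac{\vartheta-(\mathtt{h}+1)^2}{4}$ and of $\beta^{-1}$.

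Both operators preserve each line $\Bbbk v_i$. They are not $\Bbbk$-linear, so I would not use them directly. Instead I would repeat the minimal-support argument from the proof of Theorem~\ref{thm-s1.8-9}. Choose $0\neq w'\in L_\beta$ with polynomial coefficients and the minimal number of non-zero coordinates. Suppose two coordinates, at $v_i$ and $v_j$, are non-zero. Consider combinations of the form $q(\mathtt{h})\,\mathtt{e}^m w'-q'(\mathtt{h})\,w'$ with $q,q'\in\mathbb{C}[\mathtt{h}]$, chosen so that the $v_i$-coordinate cancels, namely
\begin{displaymath}
q\,\sigma^m(p_i)\sigma^i(\beta)=q'p_i,
\end{displaymath}
while the $v_j$-coordinate survives. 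Such polynomial $q,q'$ exist once denominators are cleared. The $v_j$-coordinate dies only if
\begin{displaymath}
\frac{\sigma^m(p_i)}{p_i}\,\sigma^i(\beta)=\frac{\sigma^m(p_j)}{p_j}\,\sigma^j(\beta).
\end{displaymath}
If this held, then $\sigma^i(\beta)/\sigma^j(\beta)$ would lie in the group $\{\sigma^m(a)/a\}$. That would give an isomorphism $N_i\cong N_j$ of simple $R^{(m)}$-modules, which contradicts regularity of $\boldsymbol{\zeta}$ exactly as in Theorem~\ref{thm-s1.8-9}\eqref{thm-s1.8-9.1}. If the naive combination happens to vanish entirely, apply $\mathtt{e}^{2m}$ or multiply by a generic polynomial first. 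Minimality then forces $w'=f_jv_j$ for a single $j$, and after clearing denominators $f_j\in\mathbb{C}[\mathtt{h}]$ is non-zero. I expect this step to be the main obstacle: one must make sure the combination really lives in $U_\vartheta$ with only polynomial coefficients, and that the non-vanishing argument is carried out correctly at the level of $\mathbb{C}[\mathtt{h}]$ rather than $\Bbbk$.

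\emph{Step 3: move to the other indices.} Suppose $f_jv_j\in L_\beta$. Then
\begin{displaymath}
\mathtt{e}\cdot f_jv_j=\sigma(f_j)v_{j+1}\quad\text{for } j<m-1,\qquad \mathtt{e}\cdot f_{m-1}v_{m-1}=\sigma(f_{m-1})\beta v_0.
\end{displaymath}
In the wrap-around case, multiplying by the polynomial $b(\mathtt{h})$ from $\beta=c\frac{a}{b}$ gives the polynomial coefficient $c\,\sigma(f_{m-1})a$. The result is non-zero because $a\neq0$ and $\sigma(f_{m-1})\neq0$. Iterating covers all $j\in\{0,\dots,m-1\}$.
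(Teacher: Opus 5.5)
Your proposal is correct and follows essentially the paper's route: clear denominators, run a minimal-support reduction using a degree-zero operator that acts diagonally on the lines $\Bbbk v_i$, derive the contradiction $\sigma^i(\beta)/\sigma^j(\beta)\in\{\sigma^m(a)/a\}$ with regularity, and then move between indices with $\mathtt{e}$ (multiplied by $b$ at the wrap-around). The only differences are cosmetic. The paper uses $(b\mathtt{e})^m$, whose eigenvalues $[b]^{(m)}\sigma^i(\beta)$ are already polynomials, and it re-derives the non-isomorphism of the $N_i$ by a coset-sum count; you use $\mathtt{e}^m$ with polynomial multipliers $q,q'$ and cite the non-isomorphism from the proof of Theorem~\ref{thm-s1.8-9}\eqref{thm-s1.8-9.1}. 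Your concern about polynomial coefficients is harmless, since multiplying by a polynomial does not change the support. Your fallback for the case where the combination vanishes is never needed, because you have already shown that the $v_j$-coordinate survives.
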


\begin{proof}
The action of $\mathbb{C}[\mathtt{h}]$ is torsion-free, in particular,
starting from any non-zero element of  $\Bbbk$, we can multiply it
with a non-zero element of $\mathbb{C}[\mathtt{h}]$ to get 
a non-zero element of $\mathbb{C}[\mathtt{h}]$. Therefore 
$L_\beta$ contains a non-zero element of the form $\sum_i r_i v_i$,
where all $r_i\in \mathbb{C}[\mathtt{h}]$. If only one $r_i$ is non-zero,
then $L_\beta$ contains the corresponding $r_i v_i$. 
The element $bx$ clearly belongs to the image of $\mathfrak{sl}_2$.
Applying it to $r_i v_i$ repeatedly, we get non-zero elements of
the form $f_jv_j$, for all $j$.

If more than one coefficient $r_i$ is non-zero, we need to show that 
$L_\beta$ contains some other non-zero element in which 
a strictly smaller number of coefficients is non-zero.
Then we will be done by induction.

The element $(bx)^m$ belongs to the image of $\mathfrak{sl}_2$
as $bx$ belongs to this image. In our basis, the action of
$(bx)^m$ is diagonal and given by:
\begin{equation}\label{eq-bxm3}
(bx)^m\cdot v_i = 
[b]^{(m)} \sigma^i(\beta)v_i.
\end{equation}
Consequently, $L_\beta$ contains 
a non-zero element of the form 
$\sum_i [b]^{(m)} \sigma^m(r_i) \sigma^i(\beta)v_i$.
Let $s<t$ be two indices such that $r_s\neq 0$
and $r_{t}\neq 0$. Consider the following linear combination:
\begin{equation}\label{eq-lincomb5}
([b]^{(m)} \sigma^m(r_s) \sigma^s(\beta))\cdot \sum_i r_i v_i
-
(r_s)\cdot \sum_i [b]^{(m)} \sigma^m(r_i) \sigma^i(\beta)v_i
\end{equation}
By construction, the coefficient at $v_s$ vanishes. At the same time,
the coefficient at $v_t$ in this combination is
\begin{displaymath}
[b]^{(m)} \sigma^m(r_s) \sigma^s(\beta)r_t-
r_s [b]^{(m)} \sigma^m(r_t) \sigma^t(\beta). 
\end{displaymath}
This vanishes if and only if 
\begin{displaymath}
\frac{\sigma^m(r_sr_t^{-1})}{r_sr_t^{-1}}=
\frac{\sigma^t(\beta)}{\sigma^s(\beta)} .
\end{displaymath}
Applying $\sigma^{-s}$ and denoting $u:=\sigma^{-s}(r_sr_t^{-1})$,
we equivalently obtain
\begin{displaymath}
\frac{\sigma^m(u)}{u}=\frac{\sigma^{t-s}(\beta)}{\beta}. 
\end{displaymath}
Note that we can write the left hand side as
\begin{displaymath}
\frac{\sigma^m(u)}{u}=
\frac{\sigma(u)}{u}
\frac{\sigma^2(u)}{\sigma(u)}\dots
\frac{\sigma^m(u)}{\sigma^{m-1}(u)}=
\left[\frac{\sigma(u)}{u}\right]^{(m)},
\end{displaymath}
so we have 
\begin{equation}\label{eq-redn1}
\frac{\sigma^{t-s}(\beta)}{\beta}=
\left[\frac{\sigma(u)}{u}\right]^{(m)}.
\end{equation}

Consider the additive group $\mathbb{C}$ and its subgroup
$2\mathbb{Z}$. We interpret the multiplicative group of 
monic rational functions over $\mathbb{C}$ as the additive
group of functions from $\mathbb{C}$ to $\mathbb{Z}$ with 
finite support. Given such $f:\mathbb{C}\to \mathbb{Z}$,
we can ask when does $f$ have the form $\frac{\sigma^m(g)}{g}$,
for some $g$? It is easy to see that this holds if and only 
if the sum of the values of $f$ over every coset 
in $\mathbb{C}/2m\mathbb{Z}$ is zero.

Now consider the monic part of our $\beta$ and let
$f$ be the corresponding function. We want to understand
$\frac{\sigma^{t-s}(\beta)}{\beta}$ (note that this one is
monic independently of $\beta$). The corresponding function
$g$ is the difference between the $2(t-s)$-shift of $f$ and $f$.
By Equation~\eqref{eq-redn1} we know that $g$ has the property that
the sum of the values of $g$ over every coset in $\mathbb{C}/2m\mathbb{Z}$ 
is zero. 

For $\lambda\in\mathbb{C}$, consider $\lambda+2\mathbb{Z}$
and let $\mathbf{s}_i$, for $i\in\{0,1,\dots,m-1\}$, be the sum of the 
values of $f$ over the coset $\lambda+2i+2m\mathbb{Z}$.
Since $0<t-s<m$, we have the equations $\mathbf{s}_i-\mathbf{s}_{i+(t-s)}=0$,
for all $i$, where the indices are understood modulo $m$.
The only solutions to these equations are the following:
for each coset $\chi$ in $\mathbb{C}_{m}/(t-s)\mathbb{C}_{m}$,
there is an integer $k_{\lambda,\chi}$ such that 
$\mathbf{s}_j=k_{\lambda,\chi}$ for $j\in \chi$.

Now note that each coset $\lambda+2m\mathbb{Z}$ has a unique
intersection with $\Omega_\omega^{(m)}$. And each coset
$\lambda+2\mathbb{Z}$ intersects $\Omega_\omega^{(m)}$ at exactly
$m$ points, one for each of the subcosets $\lambda+2i+2m\mathbb{Z}$,
where $i\in\{0,1,\dots,m-1\}$.
Consequently, the condition $\mathbf{s}_j=k_{\lambda,\chi}$, 
where $j\in \chi$ and for all $\lambda$, 
means exactly that the values of $f$ over every subcoset
corresponding to
$\chi$ coincide, for every $\lambda$. This means exactly that
$(t-s)\mathbb{C}_{m}$ belongs to the stabilizer of $\boldsymbol{\zeta}$. 
This contradicts our assumption that $x^m-\beta$
is left irreducible. Therefore Equation~\eqref{eq-redn1} is not possible.
Consequently, our linear combination \eqref{eq-lincomb5}
is non-zero and has a strictly smaller number of non-zero coefficients.
So, now the proof of our lemma is completed by induction
on the number of such non-zero coefficients.
\end{proof}

For $j\in\{0,1,\dots,m-1\}$, denote by $I_j$ the ideal of 
$\mathbb{C}[\mathtt{h}]$ which consists of all element 
$q$  such that $qv_j\in L_\beta$. This ideal is non-zero
by Lemma~\ref{lem-s2.5-1}. Let $q_j$ be the (unique) monic generator of 
$I_j$, that is, $I_j=\mathbb{C}[\mathtt{h}]q_j$.

\begin{lemma}\label{lem-s2.5-2}
All $q_j$ are equal to $1$. 
\end{lemma}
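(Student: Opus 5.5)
The plan is to use simplicity of $L_\beta$ together with the explicit action of $\mathtt{e},\mathtt{f}$ on the homogeneous basis $v_0,\dots,v_{m-1}$. This produces divisibility relations among the monic generators $q_0,\dots,q_{m-1}$, and the choice of $\omega$ should force all of them to be constant.

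\textbf{Step 1: action on the basis.} Put $p:=\frac{\vartheta-(\mathtt{h}+1)^2}{4}=-\frac14(\mathtt{h}-r_1)(\mathtt{h}-r_2)$. From \eqref{eq-xandxinv2} and the formulas for $\Phi$, for $q\in\mathbb{C}[\mathtt{h}]$ we have
\begin{displaymath}
\mathtt{e}\cdot qv_j=\sigma(q)v_{j+1}\ (j<m-1),\qquad
\mathtt{e}\cdot qv_{m-1}=c\,\sigma(q)\tfrac{a}{b}\,v_0,
\end{displaymath}
\begin{displaymath}
\mathtt{f}\cdot qv_j=p\,\sigma^{-1}(q)v_{j-1}\ (j>0),\qquad
\mathtt{f}\cdot qv_0=c^{-1}p\,\sigma^{-1}(q)\sigma^{-1}\!\big(\tfrac{b}{a}\big)v_{m-1}.
\end{displaymath}
Apply these to $q_jv_j\in L_\beta$. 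Where a denominator appears, first multiply by a suitable polynomial in $\mathtt{h}$, which lies in $U_\vartheta$. This gives the divisibility relations
\begin{displaymath}
q_{j+1}\mid\sigma(q_j),\quad q_0\mid \sigma(q_{m-1})\,a,\quad q_{j-1}\mid p\,\sigma^{-1}(q_j),\quad q_{m-1}\mid p\,\sigma^{-1}(q_0)\,\sigma^{-1}(b).
\end{displaymath}
Going once around the cycle in each direction gives
\begin{displaymath}
q_j\mid \sigma^m(q_j)\cdot(\text{shift of }a)\quad\text{and}\quad q_j\mid \sigma^{-m}(q_j)\cdot[p]_{(m)}\text{-type product}\cdot(\text{shift of }b).
\end{displaymath}

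\textbf{Step 2: root bookkeeping.} Suppose some $q_j\neq1$. Pick a root $t$ of some $q_j$ with extremal real part among all roots of all $q_i$. Shifting by $\sigma^{\pm m}$ moves roots by $\pm 2m$, so such an extremal root cannot come from the $\sigma^{\pm m}(q_j)$ factor. It must therefore be an (appropriately shifted) root of $a$, of $b$, or of $p$.

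\textbf{Step 3: exclude these roots.} Here the choice of $\omega$ enters. The roots $r_1,r_2$ of $p$, and all their relevant shifts, have real part strictly less than $\omega$. The roots of $a,b$ lie in the strip $\Omega_\omega^{(m)}$. I would show that the root $t$ cannot survive, via a \emph{minimality/simplicity argument}. Consider the subset
\begin{displaymath}
X:=\Big\{\textstyle\sum_i s_iv_i\in L_\beta \;:\; \text{each } s_i \text{ is regular at } t-2i \text{ and its value there satisfies a prescribed vanishing condition}\Big\}.
\end{displaymath}
Using the formulas of Step 1, together with the fact that the relevant shifts of $p$, $a$, $b$ do not vanish at the points $t-2i$ (this is what the position of $t$ relative to $\omega$ should give), one checks that $X$ is stable under $\mathtt{e},\mathtt{f},\mathtt{h}$. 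It is nonzero, since it contains $(\mathtt{h}-t')\cdot$(a suitable element) for some $t'$. It is proper, since it excludes $q_jv_j$. This contradicts simplicity of $L_\beta$. Equivalently, one can argue directly that $\frac{q_j}{\mathtt{h}-t}v_j$ lies in the $U_\vartheta$-submodule generated by $q_jv_j$ and by the $q_iv_i$ from Lemma~\ref{lem-s2.5-1}. This is done by applying $\mathtt{f}\mathtt{e}=\frac{\mathtt{c}-(\mathtt{h}+1)^2}{4}$-type elements and subtracting suitable $\mathbb{C}[\mathtt{h}]$-multiples, in the spirit of the linear-combination trick \eqref{eq-lincomb5}. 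Either version contradicts the minimality of $q_j$.

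\textbf{Main obstacle.} The delicate part is Step 3: choosing the extremal root (maximal or minimal real part) so that it avoids the roots of $p$ and their shifts, while correctly tracking the wrap-around factors $a$ and $b$. I expect a case split according to whether $t$ is congruent modulo $2$ to roots of $a$, of $b$, or of neither. In each case the regularity of $\boldsymbol{\zeta}$ is not needed; only the placement of $\omega$ relative to $r_1,r_2$ is used. I would first try the argument with the root of maximal real part and the $\mathtt{f}$-relations, then symmetrically with the minimal root and the $\mathtt{e}$-relations.
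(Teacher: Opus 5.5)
\textbf{Verdict: there is a genuine gap.} Your Step 1 is correct and already contains everything the proof needs. The problem is that you never draw the conclusion from it, and Step 3, which is meant to do the real work, is not an argument.

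The extremal-root remark in Step 2 gives no contradiction on its own. A root of minimal real part of $q_j$ may well be a root of $\sigma^j(a)$, and one of maximal real part a root of $\sigma^{j-m}(b)$; nothing in Step 2 rules this out. In Step 3 you never specify $X$: neither the points nor the vanishing condition. Its stability under $\mathtt{e},\mathtt{f}$ is only asserted. The wrap-around steps $v_{m-1}\to v_0$ and $v_0\to v_{m-1}$, where $\frac{a}{b}$ and $\sigma^{-1}(\frac{b}{a})$ enter, are exactly where such a condition can fail to propagate. Neither $X\neq 0$ nor $q_jv_j\notin X$ is tied to the root $t$ in any checkable way. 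The alternative of producing $\frac{q_j}{\mathtt{h}-t}v_j$ from $\mathtt{f}\mathtt{e}$-type elements is equally unsupported, and the proposed case split by congruence classes modulo $2$ misses the actual mechanism.

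The missing idea is to bound the real parts of the roots of $q_j$ \emph{separately for each index $j$}. No further use of simplicity is needed beyond the existence of the $q_j$.
\begin{enumerate}[(1)]
\item From your $\mathtt{e}$-relations, $q_j\mid\sigma^j(q_0)$ and $q_0\mid a\,\sigma^m(q_0)$. Iterate, using that $q_0$ and $\sigma^{rm}(q_0)$ are coprime for $r\gg 0$. Then every root of $q_0$ is a root of some $\sigma^{km}(a)$ with $k\geq 0$, so it has real part at least $\omega$. Hence every root of $q_j$ has real part at least $\omega+2j$.
\item From your $\mathtt{f}$-relations, $q_{m-1}\mid [p]_{(m)}\,\sigma^{-1}(b)\,\sigma^{-m}(q_{m-1})$. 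The roots of every $\sigma^{-k}(p)$, $k\geq 0$, have real part less than $\omega$. Those of $\sigma^{-1-km}(b)$ have real part less than $\omega+2m-2$. Iterating, every root of $q_{m-1}$ has real part less than $\omega+2m-2$.
\item Together with the lower bound from (1), this forces $q_{m-1}=1$. Then $q_{j-1}\mid p\,\sigma^{-1}(q_j)$, with $q_{j-1}$ coprime to $p$, gives $q_j=1$ for all $j$ by downward induction.
\end{enumerate}

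This is essentially the paper's proof. The paper first confines the roots of each $q_j$ to $[\omega,\omega+2m)$ using the diagonal operators $(bx)^m$ and $(\sigma^{-1}(a)(\vartheta-(\mathtt{h}+1)^2)x^{-1})^m$. It then refines this to $[\omega+2j,\omega+2j+2)$ using $\mathtt{e}$ and $\mathtt{f}$. Finally it kills $q_{m-1}$ with the single wrap-around relation $q_{m-1}\mid\sigma^{-1}(q_0)\,\sigma^{-1}(b)\,(\vartheta-(\mathtt{h}+1)^2)$.
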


\begin{proof}
Recall from the proof of Lemma~\ref{lem-s2.5-1} that the element
$(bx)^m$ belongs to the image of $U(\mathfrak{sl}_2)$ in $R$.
Applying to $q_jv_j$ the element $(bx)^m$ and using 
Equation~\eqref{eq-bxm3}, we get
\begin{displaymath}
[b]^{(m)}\sigma^j(\beta)\sigma^m(q_j)v_j=c\cdot 
\frac{[b]^{(m)}}{\sigma^j(b)}\sigma^j(a)\sigma^m(q_j)v_j,
\end{displaymath}
where $\frac{[b]^{(m)}}{\sigma^j(b)}\in \mathbb{C}[\mathtt{h}]$.
In particular,  $q_j$ must divide the coefficient
$\frac{[b]^{(m)}}{\sigma^j(b)}\sigma^j(a)\sigma^m(q_j)$.
Applying $(bx)^m$ again and again and using that, eventually,
$q_j$ will be coprime with $\sigma^{mr}(q_j)$, for $r\gg 0$, we obtain
that $q_j$ will divide some element of the form
\begin{displaymath}
u\sigma^m(u)\sigma^{2m}(u)\dots \sigma^{rm}(u),
\text{ where }
u=\frac{[b]^{(m)}}{\sigma^j(b)}\sigma^j(a).
\end{displaymath}
Consequently, the real part of any zeros of $q_j$ is greater than or equal to 
$\omega$.

On the other hand, we can play a similar game with 
the element $(\sigma^{-1}(a)\mathtt{u}x^{-1})^m$, where
$\mathtt{u}=(\vartheta-(\mathtt{h}+1)^2)$. This element
also belongs to the image of $U(\mathfrak{sl}_2)$ in $R$.
In our basis, the action of
$(\sigma^{-1}(a)\mathtt{u}x^{-1})^m$ is also diagonal and given by:
\begin{equation}\label{eq-axim7}
(\sigma^{-1}(a)\mathtt{u}x^{-1})^m\cdot v_i = 
[\sigma^{-1}(a)\mathtt{u}]_{(m)} \sigma^{i-m}(\beta^{-1})v_i.
\end{equation}
Therefore, applying $(\sigma^{-1}(a)\mathtt{u}x^{-1})^m$ to 
$q_jv_j$ once gives
\begin{displaymath}
[\sigma^{-1}(a)\mathtt{u}]_{(m)} \sigma^{j-m}(\beta^{-1})
\sigma^{-m}(q_j)v_j=\frac{1}{c}\cdot
\frac{[\sigma^{-1}(a)\mathtt{u}]_{(m)}}{\sigma^{j-m}(a)}
\sigma^{j-m}(b)\sigma^{-m}(q_j)v_j
\end{displaymath}

So, applying $(\sigma^{-1}(a)\mathtt{u}x^{-1})^{rm}$, for $r\gg 0$,
we obtain that $q_j$ must divide some 
\begin{displaymath}
v\sigma^{-m}(v)\sigma^{-2m}(v)\dots \sigma^{-rm}(v),
\text{ where }
v=\frac{[\sigma^{-1}(a)\mathtt{u}]_{(m)}}{\sigma^{j-m}(a)}
\sigma^{j-m}(b).
\end{displaymath}
Due to our choice of $\omega$, the real part of any 
zero of $q_j$ is strictly less than $\omega+2m$. 
Combining the two conditions, we obtain that the real part of 
any  zero of $q_j$ belongs to the half-interval $[\omega,\omega +2m)$.

Now we apply just $x$, which corresponds to the action of $\mathtt{e}$.
If $j\neq m-1$, applying $x$ to $q_jv_j$ gives $\sigma(q_j)v_{j+1}$
and hence $q_{j+1}$ must divide $\sigma(q_j)$. This implies, by induction
on $j$, that the real part of any zero of $q_j$ is greater than or
equal to $\omega+2j$, for any $j$. Playing the same game with 
$\mathtt{u}x^{-1}$, which corresponds to the action of $\mathtt{f}$
(up to a non-zero scalar), we see that $q_jv_j$ is sent to
$\mathtt{u}\sigma^{-1}(q_j)v_{j-1}$, if $j\neq 0$. 
As we already know that all $q_j$ are coprime with 
$\mathtt{u}$ (the latter has zeros outside $\Omega_\omega^{(m)}$
by our choice of $\omega$), it follows that 
$q_{j-1}$ must divide $\sigma^{-1}(q_j)$. This implies, by induction
on $j$, that the real part of any zero of $q_j$ is less than 
$\omega+2j+2$, for any $j$. To sum it up: the real part of
any zero of each $q_j$ belongs to $[\omega+2j,\omega+2j+2)$.

Now we apply to $q_0v_0$ the element 
$\sigma^{-1}(a)\mathtt{u}x^{-1}$  resulting in
$\sigma^{-1}(q_0)\sigma^{-1}(b)\mathtt{u}v_{m-1}$.
So, $q_{m-1}$ must divide $\sigma^{-1}(q_0)\sigma^{-1}(b)\mathtt{u}$.
All zeros of $q_0$ have real parts in $[\omega,\omega+2)$
and hence all zeros of $\sigma^{-1}(q_0)$ are outside
$\Omega_\omega^{(m)}$. So, $\sigma^{-1}(q_0)$ and $q_{m-1}$ are coprime.
Similarly, all zeros of $\mathtt{u}$ are outside
$\Omega_\omega^{(m)}$ and hence $\mathtt{u}$ and $q_{m-1}$ 
are coprime. By construction, $b$ has zeros inside 
$\Omega_\omega^{(m)}$, so all zeros of $\sigma^{-1}(b)$ must be
inside $\sigma^{-1}(\Omega_\omega^{(m)})$. At the same time,
the zeros of $q_{m-1}$ have real parts in $[\omega+2(m-1),\omega+2m)$
and hence are outside $\sigma^{-1}(\Omega_\omega^{(m)})$. Therefore
$q_{m-1}=1$. Applying $\mathtt{u}x^{-1}$ and using the arguments
from the previous paragraph gives $q_j=1$, for all $j$. This 
completes the proof.
\end{proof}

From Lemma~\ref{lem-s2.5-2}, we have $L_\beta=U(\mathfrak{g})v_0$.
Now we want to describe the $\mathbb{C}$-basis of this module
inside $N_\beta$ explicitly.

\begin{theorem}\label{thm-s2.5-7}
Let $\beta=g_{\boldsymbol{\zeta}}$ as above be such that 
$N_\beta$ is simple. Then $L_\beta$ is spanned, as a vector space
over $\mathbb{C}$, by the following linearly independent elements:
\begin{enumerate}[$($a$)$]
\item\label{thm-s2.5-7.0} $\mathtt{h}^iv_j$,
where $i\in\mathbb{Z}_{\geq 0}$ and $j\in\{0,1,\dots,m-1\}$.
\item\label{thm-s2.5-7.1} 
$\frac{1}{(\mathtt{h}-\lambda-2mi-2j)^{k}}\cdot v_j$, 
where $j\in\{0,1,\dots,m-1\}$, 
$i\in\mathbb{Z}_{\geq 0}$ and $1\leq k\leq -\mathtt{m}(\lambda)$,
for each $\lambda\in \Omega_\omega^{(m)}$ such that $\mathtt{m}(\lambda)<0$;
\item\label{thm-s2.5-7.2} 
$\frac{1}{(\mathtt{h}-\lambda+2mi+2(m-j))^{k}}\cdot v_j$, 
where $j\in\{0,1,\dots,m-1\}$,  $i\in\mathbb{Z}_{\geq 0}$ and 
\begin{displaymath}
1\leq k\leq \mathtt{m}(\lambda)-
|\{s\in\{1,2\}\,:\, r_s\in\{\lambda-2,\lambda-4,\dots,\lambda-2mi-2(m-j)\}\}|,
\end{displaymath}
for each $\lambda\in \Omega_\omega^{(m)}$ such that $\mathtt{m}(\lambda)>0$.
\end{enumerate}
\end{theorem}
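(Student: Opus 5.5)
The plan is to define $V$ as the $\mathbb{C}$-span of the elements listed in (a)--(c) and to show $V=L_\beta$. Since $L_\beta=U(\mathfrak{g})v_0$ by Lemma~\ref{lem-s2.5-2}, this splits into three tasks: (i) $V$ is stable under $\mathtt{h}$, $\mathtt{e}\mapsto x$ and $\mathtt{f}\mapsto\frac{\mathtt{u}}{4}\sigma^{-1}(\cdot)x^{-1}$, with $\mathtt{u}=\vartheta-(\mathtt{h}+1)^2=-(\mathtt{h}-r_1)(\mathtt{h}-r_2)$; (ii) $v_0\in V$; (iii) every element of $V$ lies in $U(\mathfrak{g})v_0$. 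Linear independence is automatic. Each $N_j=\Bbbk v_j$ is a separate summand, and the listed coefficients are distinct elements of the basis $\mathbf{B}$ of $\Bbbk$. To see that they are distinct one uses $\omega$: the poles in (b) have real parts $\geq\omega+2j$, those in (c) have real parts $<\omega+2m+2j-2m\le\ldots$, and the two families are disjoint because $\lambda\in\Omega_\omega^{(m)}$.

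For (i), write $V=\bigoplus_j V_jv_j$, where $V_j\subset\Bbbk$ is the set of rational functions with at most the prescribed poles, i.e. poles of order $\le-\mathtt{m}(\lambda)$ at $\lambda+2mi+2j$ and of order $\le$ the stated bound at $\lambda-2mi-2(m-j)$. Stability under $\mathtt{h}$ is then clear, since multiplication by $\mathtt{h}$ does not raise pole orders and polynomials are included. For $x$, we have $x\cdot(rv_j)=\sigma(r)v_{j+1}$ when $j<m-1$. Here $\sigma$ shifts the poles by $+2$, which exactly matches the index change $j\mapsto j+1$ in (b). In (c) the shift sends $\lambda-2mi-2(m-j)$ to $\lambda-2mi-2(m-j-1)$, and the allowed multiplicity grows because the count of $r_s$ in the shorter range can only drop. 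For $j=m-1$ we get $x\cdot(rv_{m-1})=\sigma(r)\beta v_0$. Multiplying by $\beta=c\,a/b$ does two things. The zeros of $a$ at $\lambda\in\Omega^{(m)}_\omega$ cancel the extra pole at $\lambda$ coming from the ($i\mapsto i+1$ wraparound) family (c) at level $i=0$. The poles of $b$ at $\lambda$ are exactly the new (b)-poles with $i=0$, $j=0$; more precisely, the (c)-pole $\lambda-2mi-2$ at $j=m-1$ becomes $\lambda-2mi$ at $j=0$, which is the (c) family with $i$ replaced by $i-1$ plus the $a$-cancellation at $i=0$. For $\mathtt{f}$ the argument is symmetric, using $\sigma^{-1}$ and $\sigma^{-1}(\beta^{-1})$ at the wraparound $j=0\to m-1$. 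The one new feature is the factor $\mathtt{u}$: each time a (c)-type pole sweeps past a root $r_s$, the factor $(\mathtt{h}-r_s)$ in $\mathtt{u}$ cancels one order of the pole. This is exactly why the bound in (c) subtracts the number of $r_s$ in $\{\lambda-2,\dots,\lambda-2mi-2(m-j)\}$. Because of the choice of $\omega$, the roots $r_s$ never meet the (b)-poles.

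For (iii), the strategy mirrors Lemma~\ref{lem-s2.5-2}, which already gives $\mathbb{C}[\mathtt{h}]v_j\subset L_\beta$ for all $j$. To produce a pole, take a suitable polynomial element $p\,v_{j'}$ and apply $x$ or $\frac{\mathtt{u}}{4}x^{-1}$. Powers of $(bx)$ divided out are not allowed, so the approach is instead to show that the polar part appears modulo $\mathbb{C}[\mathtt{h}]v_j$. Concretely, applying $\mathtt{e}$ to $\mathbb{C}[\mathtt{h}]v_{m-1}$ yields $\beta\,\sigma(\mathbb{C}[\mathtt{h}])v_0$, whose partial fraction decomposition contains $\frac{1}{(\mathtt{h}-\lambda)^k}v_0$ for all $k\le-\mathtt{m}(\lambda)$. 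These are obtained by choosing the polynomial to kill the other pole orders, which is possible by the Chinese remainder theorem. One then propagates these by repeated $\mathtt{e}$ (shifting by $2$, giving (b)), and by $\mathtt{f}$ from $v_0$ to $v_{m-1}$ using $\beta^{-1}$ to produce the (c) poles, again with polynomial multipliers. At each step one subtracts already obtained elements and uses $\mathtt{h}$-multiplication to isolate individual basis elements of $\mathbf{B}$.

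I expect the main obstacle to be the exact bookkeeping of pole orders in (c) when roots $r_s$ of $\mathtt{u}$ lie in the swept range, and in particular showing that the bound is attained and not merely an upper bound. To handle this, I would track the minimal order that survives under iterated application of $\frac{\mathtt{u}}{4}x^{-1}$, arguing case by case on whether $r_1=r_2=-1$ or $r_1\ne r_2$.
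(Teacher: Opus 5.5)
Your proposal is correct and follows essentially the same route as the paper. You take the span of (a)--(c), use Lemma~\ref{lem-s2.5-2} to get $\mathbb{C}[\mathtt{h}]v_j\subset L_\beta$, extract the (b)-poles from the wraparound $\mathtt{e}$-action through $\beta$, obtain the (c)-poles by iterating $\mathtt{f}$ while accounting for the $\mathtt{u}$-cancellations, and check closure by tracking pole shifts. The only difference from the paper is inessential: it applies $x^m$ to $v_0$ and multiplies by $b/(\mathtt{h}-\lambda)^k$ where you use the Chinese remainder theorem. Your closure bookkeeping is more explicit than the paper's, and the planned case split on $r_1=r_2$ versus $r_1\neq r_2$ is unnecessary, since the bound in (c) already counts the roots of $\mathtt{u}$ with multiplicity.
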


\begin{proof}
Let $K_\beta$ be the subspace of $N_{\beta}$ given by the 
$\mathbb{C}$-basis in the formulation of the theorem. We need
to prove two things: that $K_\beta$ is closed under the action of 
$\mathfrak{g}$ and that any element of $K_\beta$ belongs to
$L_\beta$. Our strategy of the proof is to adjust 
the arguments in the proof of \cite[Theorem~9]{GKM} to our situation.
One can compare the formulation of our theorem with that of 
\cite[Theorem~9]{GKM} and see how the present formulation 
is adjusted taking into account the $\mathbf{C}_m$-grading that we have.

To start with, we note that $L_\beta$ contains 
$\mathbb{C}[\mathtt{h}]v_j$, for all $j$, as 
follows from Lemma~\ref{lem-s2.5-2}.
Now, we start with some $\lambda\in \Omega_\omega^{(m)}$ such that 
$\mathtt{m}(\lambda)<0$. Applying $x^m$ to $v_0$ gives
$\frac{a(\mathtt{h})}{b(\mathtt{h})}v_0$ (up to a non-zero scalar).
Note that $\frac{a(\mathtt{h})}{b(\mathtt{h})}$ has
$(\mathtt{h}-\lambda)^{\mathtt{m}(\lambda)}$ as a factor.
The numerator $a(\mathtt{h})$ is coprime with the denominator
and hence, writing $\frac{a(\mathtt{h})}{b(\mathtt{h})}$ in the basis
$\mathbf{B}$, the element $(\mathtt{h}-\lambda)^{\mathtt{m}(\lambda)}$
will appear with a non-zero coefficient. Applying
$\frac{b(\mathtt{h})}{(\mathtt{h}-\lambda)^k}$, for $k=1,2,\dots,-\mathtt{m}(\lambda)$,
and removing elements that belong to  
$\mathbb{C}[\mathtt{h}]v_0$, gives that $L_\beta$ contains
$\frac{1}{(\mathtt{h}-\lambda)^k}\cdot v_0$, 
for $k=1,2,\dots,-\mathtt{m}(\lambda)$.
Applying to these elements $x^m$ again and again and repeating the
arguments above gives that $L_\beta$ contains
$\frac{1}{(\mathtt{h}-\lambda-2mi)^k}\cdot v_0$, 
for $k=1,2,\dots,-\mathtt{m}(\lambda)$
and for all $i\in\mathbb{Z}_{\geq 0}$. 
Applying $x$ to 
$\frac{1}{(\mathtt{h}-\lambda-2mi)^k}\cdot v_0$ gives 
$\frac{1}{(\mathtt{h}-\lambda-2mi-2)^k}\cdot v_1$ and so on,
which justifies the elements from \eqref{thm-s2.5-7.1}.

Let $t_1,t_2\in\Omega_\omega^{(2)}$
be the unique elements for which there exist $n_1,n_2\in\mathbb{Z}_{>0}$
such that $t_1-r_1=4n_1$ and $t_2-r_2=4n_2$. Such $t_1$ and $t_2$ exist due to
our assumption on $\omega$. Now we want to justify the basis elements from
\eqref{thm-s2.5-7.2} applying powers of $\mathtt{f}$.
Here we point out a crucial difference with the previous case: the action of
$\mathtt{f}$ is given by 
$\frac{1}{4}\left(\vartheta-(\mathtt{h}+1)^2\right)x^{-1}$ and not
just by $x^{-1}$, so we need to take into account the additional
factor $\vartheta-(\mathtt{h}+1)^2$ whose contribution
leads to potential cancellations with some factors in the denominator.
For these cancellations to happen we need to have $\mathtt{m}(t_i)>0$,
for some $i\in \{1,2\}$. We also need to remember that the coefficient
appearing in the matrix of $x^{-1}$ is 
$\frac{1}{c}\cdot\frac{b(\mathtt{h}+2)}{a(\mathtt{h}+2)}$, 
see \eqref{eq-xandxinv2}.
Applying the arguments in the previous paragraph,
we see that the formulae in \eqref{thm-s2.5-7.2}
account exactly for such cancellations. This completes the proof of the 
inclusion  $K_\beta\subset L_\beta$.

To argue that $K_\beta$ is closed under the action of $\mathfrak{g}$,
we just need to check that, applying $\mathtt{e}$ and $\mathtt{f}$
to the elements that generate $K_\beta$, we cannot produce any new
denominators. Indeed, the only denominators that can appear come from
the zeros and the poles of $\beta$, as well as from their 
$\sigma^{\pm 1}$-shifts. Additionally, we need to account for 
potential cancellations between the factors of 
$\vartheta-(\mathtt{h}+1)^2$ and $\sigma^{i}(a(\mathtt{h}))$.
These are taken care of by the formulae in 
\eqref{thm-s2.5-7.2}. This completes the proof of the theorem.
\end{proof}

\begin{corollary}\label{cor-fing}
Let $\beta=g_{\boldsymbol{\zeta}}$ as above be such that 
$N_\beta$ is simple. Then $L_\beta$ is finitely generated over 
$\mathbb{C}[\mathtt{h}]$ if and only if, for any 
$\lambda\in\Omega_\omega^{(m)}$, we have 
\begin{displaymath}
0\leq \mathtt{m}(\lambda)\leq |\{j\in\{1,2\}\,:\, r_j\in \lambda-2\mathbb{Z}_{>0}\}|. 
\end{displaymath}
\end{corollary}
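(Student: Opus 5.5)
Since $\mathbb{C}[\mathtt{h}]$ is a principal ideal domain, I would prove the corollary by reading it off the explicit basis of $L_\beta$ in Theorem~\ref{thm-s2.5-7}. The elements of type~\eqref{thm-s2.5-7.0} span $\bigoplus_j \mathbb{C}[\mathtt{h}]v_j$, a free $\mathbb{C}[\mathtt{h}]$-module of rank $m$. So $L_\beta$ is finitely generated over $\mathbb{C}[\mathtt{h}]$ if and only if the quotient $L_\beta/\bigoplus_j\mathbb{C}[\mathtt{h}]v_j$ is finitely generated. This quotient is torsion, so it is finitely generated if and only if it is finite dimensional over $\mathbb{C}$. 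By linear independence, a $\mathbb{C}$-basis of the quotient is given by the images of the elements of types~\eqref{thm-s2.5-7.1} and~\eqref{thm-s2.5-7.2}. Hence the whole statement reduces to counting these elements.

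For type~\eqref{thm-s2.5-7.1}: if some $\lambda\in\Omega^{(m)}_\omega$ has $\mathtt{m}(\lambda)<0$, then for $k=1$ the elements $\frac{1}{\mathtt{h}-\lambda-2mi}v_0$, $i\in\mathbb{Z}_{\geq0}$, are infinitely many distinct elements. So finiteness forces $\mathtt{m}(\lambda)\geq 0$ for all $\lambda$. Conversely, if $\mathtt{m}\geq 0$ everywhere, there are no elements of type~\eqref{thm-s2.5-7.1} at all.

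For type~\eqref{thm-s2.5-7.2}, fix $\lambda$ with $\mathtt{m}(\lambda)>0$ and $j$. Let
\[
N_{i}:=|\{s\in\{1,2\}\,:\, r_s\in\{\lambda-2,\dots,\lambda-2mi-2(m-j)\}\}|.
\]
The number of basis elements indexed by $(\lambda,j,i)$ is $\max(0,\mathtt{m}(\lambda)-N_{i})$. As $i$ grows the set of shifts grows, so $N_{i}$ is non-decreasing and stabilizes at
\[
N_\infty:=|\{s\in\{1,2\}\,:\, r_s\in\lambda-2\mathbb{Z}_{>0}\}|.
\]
Hence the total count over $i$ is finite exactly when $\mathtt{m}(\lambda)\leq N_\infty$, and this holds for every $j$ simultaneously. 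Since $\mathtt{m}$ has finite support, there are only finitely many $\lambda$ and $j$ to consider, so finiteness holds overall precisely under the stated inequalities. Combined with the previous paragraph, this gives both directions.

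The main obstacle is bookkeeping rather than ideas. One point needs care: $r_1=r_2$ is allowed, and then the set $\{s\in\{1,2\}:\dots\}$ still counts indices, contributing $2$. This matches the convention in the statement. One should also check that the counting in Theorem~\ref{thm-s2.5-7}\eqref{thm-s2.5-7.2} uses the same convention (counting indices $s$, not distinct values), so that the bound $N_\infty$ appearing in the corollary coincides with the stabilized value of $N_i$.
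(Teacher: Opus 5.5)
Your proposal is correct and follows the paper's route: finite generation is read off from the explicit basis in Theorem~\ref{thm-s2.5-7} by checking when only finitely many elements of types~\eqref{thm-s2.5-7.1} and~\eqref{thm-s2.5-7.2} occur. You also supply the details the paper's two-line proof leaves implicit, namely why finite generation is equivalent to finiteness of that count (the quotient by the free part $\bigoplus_j\mathbb{C}[\mathtt{h}]v_j$ is torsion) and why the count in~\eqref{thm-s2.5-7.2} stabilizes at the bound appearing in the corollary.
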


\begin{proof}
The module $L_\beta$ is finitely generated over $\mathbb{C}[\mathtt{h}]$
if and only if the basis of $L_\beta$ described in Theorem~\ref{thm-s2.5-7}
is such that the total number of basis elements listed in \eqref{thm-s2.5-7.1}
and \eqref{thm-s2.5-7.2} is finite. From the formulation of 
Theorem~\ref{thm-s2.5-7}, we directly see that this is equivalent to the
condition of the corollary.
\end{proof}

We remark that being torsion-free and, at the same time, finitely generated 
over $\mathbb{C}[\mathtt{h}]$ is  equivalent to being free of finite rank.
Therefore Corollary~\ref{cor-fing} provides a classification of all 
$\mathbf{C}_m$-gradable simple $\mathfrak{sl}_2$-module that 
are free of rank $m$ over $\mathbb{C}[\mathtt{h}]$.

\begin{example}\label{ex-sl2}
{\em
Let $m=2$ and $\vartheta=0$. Then $\vartheta-(\mathtt{h}+1)^2=-(\mathtt{h}+1)^2$ 
has root $-1$ of multiplicity $2$. Take $\omega=0$ and let
$\beta=\mathtt{h}-2$ (so $c=1$ and $\mathtt{m}$ has support $2$
where it has value $1$). Clearly, $\beta\neq \alpha\sigma(\alpha)$, 
for any $\alpha$ and hence $x^2-\beta$ is left irreducible. Note
that $-1\not\in 2+2\mathbb{Z}$. Therefore $L_\beta$ in this case 
has a basis consisting of all $\mathtt{h}^iv_0$ and all
$\mathtt{h}^iv_1$, where $i\in\mathbb{Z}_{\geq 0}$, as well as 
all  $\frac{1}{\mathtt{h}+2+4i}v_0$
and all $\frac{1}{\mathtt{h}+4i}v_1$, where $i\in\mathbb{Z}_{\geq 0}$.
} 
\end{example}

\begin{example}\label{ex-sl2-2}
{\em
Let $m=2$ and $\vartheta=0$. Then $\vartheta-(\mathtt{h}+1)^2=-(\mathtt{h}+1)^2$ 
has root $-1$ of multiplicity $2$. Take $\omega=0$ and let
$\beta=\mathtt{h}-3$ (so $c=1$ and $\mathtt{m}$ has support $3$
where it has value $1$). Clearly, $\beta\neq \alpha\sigma(\alpha)$, 
for any $\alpha$ and hence $x^2-\beta$ is left irreducible. Note
that $-1= 3-2\cdot 2$. Therefore $L_\beta$ in this case 
has a basis consisting of all $\mathtt{h}^iv_0$ and all
$\mathtt{h}^iv_1$, where $i\in\mathbb{Z}_{\geq 0}$, as well as 
the element $\frac{1}{\mathtt{h}-1}v_1$.
We see that, in this case, $L_\beta$ is free over $\mathbb{C}[\mathtt{h}]$.
} 
\end{example}

\section{Applications to simple modules over the first Weyl algebra}\label{s3}

\subsection{General setup}\label{s3.1}

Recall that the {\em first Weyl algebra} $\mathbf{A}_1$
is defined by the following presentation: 
$\mathbf{A}_1:=\langle a,b\,:\, ab-ba=1\rangle$.
The algebra $\mathbf{A}_1$ is simple. The algebra 
$\mathbf{A}_1$ admits a natural $\mathbb{Z}$-grading in which
the degree of $a$ is $1$ and the degree of $b$ is $-1$.
For any positive integer $m$, this induces a grading
by $\mathbf{C}_m$.

Just like in Section~\ref{s1}, we consider $\Bbbk:=\mathbb{C}(\mathtt{h})$,
its automorphism $\sigma$ of $\Bbbk$ and the corresponding skew 
Laurent polynomial algebra  $R:=\Bbbk[x,x^{-1},\sigma]$.
Mapping $a$ to $x$ and $b$ to $-\frac{\mathtt{h}}{2}x^{-1}$
defines an injective algebra homomorphism $\psi$ from $\mathbf{A}_1$ into $R$.
This allows us to view $R$-modules as $\mathbf{A}_1$-modules
by pulling back via $\psi$.

\subsection{Simple gradable torsion free $\mathbf{A}_1$-modules}\label{s3.2}

Let $m$ be a positive integer. Take $\omega=1$. Then, by 
Theorem~\ref{thm-s1.8-9}, the isomorphism classes of 
simple $R$-modules of the form $R/R(x^m-\beta)$ are in bijection
with the regular orbits of $\mathbf{C}_m$ on $\Psi^{(m)}$.
Let $\boldsymbol{\zeta}\in \Psi^{(m)}$, be such that the
corresponding $R$-module $R/R(x^m-g_{\boldsymbol{\zeta}})$ is simple.
Denote by $F_{\boldsymbol{\zeta}}$ the simple socle of the 
restriction of $R/R(x^m-g_{\boldsymbol{\zeta}})$ to $\mathbf{A}_1$.
Note that $F_{\boldsymbol{\zeta}}\cong F_{\boldsymbol{\zeta}'}$
if and only if $\boldsymbol{\zeta}\in \mathbf{C}_m\cdot \boldsymbol{\zeta}'$.
Moreover, the modules of the form $F_{\boldsymbol{\zeta}}$
exhaust $\mathbf{C}_m$-gradable simple $\mathbf{A}_1$-modules
which are torsion free of rank $m$ over the subalgebra $\mathbb{C}[ab]$.

In analogy with Theorem~\ref{thm-s2.5-7},
the module $F_{\boldsymbol{\zeta}}$ can be described explicitly 
as the submodule of $R/R(x^m-g_{\boldsymbol{\zeta}})$ with the following
$\mathbb{C}$-basis:
\begin{itemize}
\item $\mathtt{h}^iv_j$,
where $i\in\mathbb{Z}_{\geq 0}$ and $j\in\{0,1,\dots,m-1\}$;
\item
$\frac{1}{(\mathtt{h}-\lambda-2mi-2j)^{k}}\cdot v_j$, 
where $j\in\{0,1,\dots,m-1\}$, 
$i\in\mathbb{Z}_{\geq 0}$ and $1\leq k\leq -\mathtt{m}(\lambda)$,
for each $\lambda\in \Omega_1^{(m)}$ such that $\mathtt{m}(\lambda)<0$;
\item 
$\frac{1}{(\mathtt{h}-\lambda+2mi+2(m-j))^{k}}\cdot v_j$, 
where  $j\in\{0,1,\dots,m-1\}$, 
$i\in\mathbb{Z}_{>0}$ and also we have $1\leq k\leq \mathtt{m}(\lambda)$,
for each $\lambda\in \Omega_1^{(m)}\setminus 2\mathbb{Z}$ 
such that $\mathtt{m}(\lambda)>0$;
\item 
$\frac{1}{(\mathtt{h}-\lambda+2mi+2(m-j))^{k}}\cdot v_j$, 
where  $j\in\{0,1,\dots,m-1\}$,
$i\in\mathbb{Z}_{>0}$ and also we have $1\leq k\leq \mathtt{m}(\lambda)-1$,
for each $\lambda\in \big(\Omega_1^{(m)}\cap 2\mathbb{Z}\big)$ 
such that $\mathtt{m}(\lambda)>0$;
\item 
$\frac{1}{(\mathtt{h}-\lambda+2(m-j))^{k}}\cdot v_j$, 
where  $j\in\{0,1,\dots,m-1\}$ and $1\leq k\leq \mathtt{m}(\lambda)-1$,
for each $\lambda\in \{2,4,\dots,2(m-j)\}$ 
such that $\mathtt{m}(\lambda)>0$;
\item 
$\frac{1}{(\mathtt{h}-\lambda+2(m-j))^{k}}\cdot v_j$, 
where  $j\in\{0,1,\dots,m-1\}$ and $1\leq k\leq \mathtt{m}(\lambda)$,
for each $\lambda\in \big(\Omega_1^{(m)}\setminus \{2,4,\dots,2(m-j)\}\big)$ 
such that $\mathtt{m}(\lambda)>0$.
\end{itemize}

Note that $F_{\boldsymbol{\zeta}}$ is finitely generated over 
$\mathbb{C}[\mathtt{h}]$ if and only if $\mathtt{m}$ is either 
the zero function or $\mathtt{m}(\lambda)\neq 0$ implies 
both $\lambda\in 2\mathbb{Z}$  and $\mathtt{m}(\lambda)=1$.

\begin{example}\label{ex-a1-1}
{\em
Let $m=3$ and $\mathtt{m}$ be given by $\mathtt{m}(1)=-\mathtt{m}(3)=\mathtt{m}(4)=1$
with all other values being $0$. Then $F_{\boldsymbol{\zeta}}$ has a basis
consisting of the following elements: $\mathtt{h}^iv_j$, where 
$i\in\mathbb{Z}_{\geq 0}$ and $j=0,1,2$, as well as the elements
$\frac{1}{\mathtt{h}-3-6i-2j}\cdot v_j$, where 
$i\in\mathbb{Z}_{\geq 0}$ and $j=0,1,2$; the elements
$\frac{1}{\mathtt{h}-1+6i+2(3-j)}\cdot v_j$, where 
$i\in\mathbb{Z}_{\geq 0}$ and $j=0,1,2$; and the element
$\frac{1}{\mathtt{h}-2}\cdot v_2$.
} 
\end{example}

\vspace{2mm}

\noindent
(V.~M.) Department of Mathematics, Uppsala University, Uppsala, SWEDEN \\
Email address: {\tt mazor\symbol{64}math.uu.se}

\noindent
(S.~X.) School of Mathematics, Hohai University,  Nanjing, Jiangsu, P.~R.~CHINA \\
Email address: {\tt shuoyang\symbol{64}hhu.edu.cn}

\end{document}